\documentclass[12pt, reqno]{amsart}

\usepackage{amsmath, amssymb, amsthm}
\usepackage[pdftex,letterpaper=true,colorlinks=true,pdfpagemode=none,urlcolor=blue,linkcolor=blue,citecolor=blue,pdfstartview=FitH]{hyperref}

\newtheorem{theorem}{Theorem}
\newtheorem{lemma}{Lemma}
\newtheorem{proposition}{Proposition}

\newtheorem{corollary}{Corollary}

\newtheorem{remark}{Remark}

\numberwithin{equation}{section}

\begin{document}

\title[Weighted isoperimetric inequalities]{Weighted isoperimetric inequalities in warped product manifolds}

\author{Kwok-Kun Kwong}
\address{Department of Mathematics, National Cheng Kung University, Tainan City 701, Taiwan}
\email{kwong@mail.ncku.edu.tw}

\thanks{Research partially supported by Ministry of Science and Technology in Taiwan under grant MOST103-2115-M-006-016-MY3.}

\subjclass[1991] {53C23}
\keywords{weighted isoperimetric inequalities, warped product manifolds, higher order mean curvature integrals}

\begin{abstract}
We prove some isoperimetric type inequalities in warped product manifolds, or more generally, multiply warped product manifolds. We then relate them to inequalities involving the higher order mean-curvature integrals. We also apply our results to obtain sharp eigenvalue estimates and some sharp geometric inequalities in space forms.
\end{abstract}
\maketitle
\section{Introduction}\label{sec: intro}

The classical isoperimetric inequality on the plane states that for a simple closed curve on $\mathbb R^2$, we have $L^2\ge 4\pi A$, where $L$ is the length of the curve and $A$ is the area of the region enclosed by it. The equality holds if and only if the curve is a circle. The classical isoperimetric inequality has been generalized to hypersurfaces in higher dimensional Euclidean space, and to various ambient spaces. For these generalizations, we refer to the beautiful article by Osserman \cite{osserman1978isoperimetric} and the reference therein. For a more modern account see \cite{ros2001isoperimetric}. Apart from two-dimensional manifolds and the standard space forms $\mathbb R^n$, $\mathbb H^n$ and $\mathbb S^n$, there are few manifolds for which the isoperimetric surfaces are known. According to \cite{bray2002isoperimetric}, known examples include $\mathbb R\times \mathbb H^n$, $\mathbb RP^3$, $\mathbb S^1\times \mathbb R^2$, $T^2\times \mathbb R$, $\mathbb R\times \mathbb S^n$, $\mathbb S^1\times \mathbb R^n$, $\mathbb S^1\times \mathbb S^2$, $\mathbb S^1\times \mathbb H^2$ and the Schwarzschild manifold, most of which are warped product manifolds over an interval or a circle. There are also many applications of the isoperimetric inequalities. For example, isoperimetric surfaces were used to prove the Penrose inequality \cite{bray2001proof}, an inequality concerning the mass of black holes in general relativity, in some important cases.

In this paper, we prove both classical and weighted isoperimetric results in warped product manifolds, or more generally, multiply warped product manifolds. We also relate them to inequalities involving the higher order mean-curvature integrals. Some applications to geometric inequalities and eigenvalues are also given.

For the sake of simplicity, let us describe our main results on a warped product manifold. The multiply warped product case is only notationally more complicated and presents no additional conceptual difficulty.

Let $\displaystyle M=[0, \lambda)\times N$ ($\lambda\le \infty$) be a product manifold.
Equip $M$ with the warped product Riemannian metric $\displaystyle g=dr^2+ s(r)^2 g_{N}$ for some continuous $s(r)\ge 0$, where $g_{N}$ is a Riemannian metric on the $m$-dimensional manifold $N$,  which we assume to be compact and oriented.
Define $B_R:=\{(r, \theta)\in M: r< R\}$.
We define the functions $A(r)$ and $v(r)$ by
\begin{align*}
A(r):= s(r)^m
\; \textrm{ and }\;
v(r):= \int_{0}^{r} A(t) dt.
\end{align*}
Up to multiplicative constants, they are just the area of $\partial B_r$ and the volume of $B_r$ respectively. For a bounded domain $\Omega$ in $M$, we define $\Omega^\#$ to be the region $B_R$ which has the same volume as $\Omega$, i.e. $\mathrm{Vol} (B_R) =\mathrm{Vol} (\Omega) $. We denote the area of $\partial \Omega$ and $\partial \Omega^\#$ by $|\partial \Omega|$ and $|\partial \Omega^\#|$ respectively.

One of our main results is the following isoperimetric theorem, which is a special case of Theorem \ref{thm weighted vol}.
\begin{theorem}\label{thm1}
Let $\Omega$ be a bounded open set in $(M,g)$ with Lipschitz boundary.
Assume that
\begin{enumerate}
\item \label{cond: surj}
The projection map $\pi: \partial \Omega\subset \mathbb [0, \lambda)\times N\to N$ defined by $(r, \theta)\mapsto \theta$ is surjective.
\item\label{cond: mono}
$s(r)$ is non-decreasing.
\item\label{cond: convex}
$A\circ v^{-1}$ is convex, or equivalently, $ss''-s'^2\ge 0$ if $s$ is twice differentiable.
\end{enumerate}
Then the isoperimetric inequality holds: $$|\partial \Omega|\ge|\partial \Omega^\#|.$$
\end{theorem}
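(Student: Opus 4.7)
The plan is to combine the divergence theorem for the unit radial vector field $\partial_r$, a one-dimensional rearrangement along each fiber of $\pi$, and Jensen's inequality on the base~$N$.

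First I would apply the divergence theorem to $X=\partial_r$ on $\Omega$. Since $|\partial_r|_g=1$ one has $\langle\partial_r,\nu\rangle\le 1$ on $\partial\Omega$, and a direct computation gives $\operatorname{div}(\partial_r)=ms'(r)/s(r)=A'(r)/A(r)$. Together with $dV=A(r)\,dr\,d\mathrm{vol}_N$ and Fubini,
\[
|\partial\Omega|\;\ge\;\int_\Omega\frac{A'(r)}{A(r)}\,dV\;=\;\int_N\!\left(\int_{E(\theta)}A'(r)\,dr\right)d\mathrm{vol}_N(\theta),
\]
where $E(\theta):=\{r\in[0,\lambda):(r,\theta)\in\Omega\}$.

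Next I would set $\phi:=A\circ v^{-1}$; condition~(3) makes $\phi$ convex and condition~(2) makes it non-decreasing. For fixed $\theta$, substituting $u=v(r)$ (so $du=A(r)\,dr$ and $A'(r)\,dr=\phi'(u)\,du$) yields $\int_{E(\theta)}A'(r)\,dr=\int_{v(E(\theta))}\phi'(u)\,du$, and because $\phi'$ is non-decreasing the elementary rearrangement fact ``$\int_F\phi'$ over measurable $F$ of prescribed measure is minimised when $F=[0,|F|]$'' gives, with $g(\theta):=\int_{E(\theta)}A(r)\,dr=|v(E(\theta))|$,
\[
\int_{v(E(\theta))}\phi'(u)\,du\;\ge\;\int_0^{g(\theta)}\phi'(u)\,du\;=\;\phi(g(\theta))-\phi(0).
\]
Now Fubini gives $\int_N g\,d\mathrm{vol}_N=\mathrm{Vol}(\Omega)=|N|\,v(R)$, where $R$ is defined by $\mathrm{Vol}(B_R)=\mathrm{Vol}(\Omega)$, and Jensen's inequality applied to the convex~$\phi$ produces
\[
\int_N\phi(g(\theta))\,d\mathrm{vol}_N\;\ge\;|N|\,\phi(v(R))\;=\;A(R)\,|N|.
\]
Chaining these yields $|\partial\Omega|\ge(A(R)-A(0))|N|$, which in the principal case $s(0)=0$ is exactly $|\partial\Omega^\#|$.

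The main obstacle, as I see it, is that $E(\theta)$ is typically a union of several intervals and need not contain~$0$, so the fiber-rearrangement really depends on the non-decreasing behaviour of $\phi'$---i.e.\ on the convexity of $A\circ v^{-1}$ from condition~(3); condition~(2), which forces $\phi'\ge 0$, is what makes the resulting bound informative rather than trivially non-positive. When $s(0)>0$ one has a residual term $A(0)|N|$, and the surjectivity hypothesis~(1) is what absorbs it: a complementary estimate from the area formula applied to $\pi|_{\partial\Omega}$, whose tangential Jacobian is at most $1/A(r)\le 1/A(0)$, gives $|\partial\Omega|\ge A(0)|N|$, which combined with the divergence-theorem estimate above recovers $|\partial\Omega^\#|$ in full generality.
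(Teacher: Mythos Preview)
Your argument is correct and rather elegant in the principal case $s(0)=0$: the divergence theorem for $\partial_r$, the fibrewise rearrangement using convexity of $\phi=A\circ v^{-1}$, and Jensen combine cleanly to give $|\partial\Omega|\ge |N|\,A(R)$. This is a genuinely different route from the paper's: the paper never invokes the divergence theorem but instead writes $\partial\Omega$ as a union of graphical sheets $\{r=r_{i,j}(\theta)\}$, uses the area formula to get $dS\ge A(r)\,d\mathrm{vol}_N$ on each sheet, and then simply \emph{discards} all sheets except the outermost one $\rho(\theta)=\max\{r:(r,\theta)\in\partial\Omega\}$ to obtain $|\partial\Omega|\ge\int_N A(\rho)\,d\mathrm{vol}_N$ directly, followed by Jensen. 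Your rearrangement step and the paper's ``keep only the top sheet'' step play analogous roles, but the paper's version never introduces the subtraction of $\phi(0)=A(0)$.

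That subtraction is exactly where your proof breaks when $s(0)>0$. You end with two separate inequalities,
\[
|\partial\Omega|\;\ge\;(A(R)-A(0))\,|N|
\qquad\text{and}\qquad
|\partial\Omega|\;\ge\;A(0)\,|N|,
\]
and claim that ``combining'' them recovers $|\partial\Omega|\ge A(R)\,|N|$. It does not: two lower bounds for the \emph{same} quantity yield only their maximum, and $\max\{A(R)-A(0),\,A(0)\}<A(R)$ whenever $0<A(0)<A(R)$. The loss is intrinsic to the divergence--theorem route, because $\int_{E(\theta)}A'(r)\,dr=\sum_k\bigl(A(b_k)-A(a_k)\bigr)$ already subtracts $A(a_1)\ge A(0)$ at the innermost sheet, and your Jacobian estimate for $\pi|_{\partial\Omega}$ bounds a \emph{different} functional of $\partial\Omega$; there is no way to add them. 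The fix is to abandon the flux inequality in favour of the area formula on the outermost sheet alone, which gives $|\partial\Omega|\ge\int_N A(\rho)\,d\mathrm{vol}_N$ with no $A(0)$ defect---and this is precisely the paper's argument. Surjectivity is then used to ensure $\rho$ is defined on all of $N$, and monotonicity of $A$ (your condition~(2)) is used after Jensen to pass from $A(R_1)$ to $A(R)$, where $R_1\ge R$ is determined by $\int_N v(\rho)\,d\mathrm{vol}_N=|N|\,v(R_1)$.
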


We remark that our notion of convexity does not require the function to be differentiable: $f$ is convex on $I$ if and only if $f((1-t)x+ty)\le (1-t)f(x)+tf(y)$ for any $t\in(0,1)$ and $x, y\in I$.

One feature of our result is that except compactness, we do not impose any condition on the base manifold $N$. We will see in Section \ref{sec: necess} that without further restriction on $N$, our conditions are optimal in a certain sense.

The expression $ss''-s'^2$ comes from the observation that
if $s$ is twice differentiable, then as $v'=s^m$, the convexity of
$A(v^{-1}(u))=s\left( v^{-1}\left( u \right)\right)^m$ is equivalent to
\begin{equation}\label{eq: conv}
\frac{d^2}{du^2} A \left(v^{-1}(u)\right)
=\frac{m}{ s(r)^{m+2}}\left(s(r)s''(r)-s'(r)^2\right)\ge 0,
\end{equation}
where $r=v^{-1}(u)$.

The expression $ss''-s'^2$ also has a number of geometric and physical meanings. It is related to the stability of the slice $\Sigma=\{r=r_0\}$ as a constant-mean-curvature (CMC) hypersurface (i.e. whether it is a minimizer of the area among nearby hypersurfaces enclosing the same volume). Indeed, it can be shown (\cite{GLW} Proposition 6.2) that $\lambda_1(g_N)\ge m\left(s(r_0)'^2-s(r_0)s''(r_0)\right)$ if and only if $\Sigma$ is a stable CMC hypersurface, where $\lambda_1(g_N)$ is the first Laplacian eigenvalue of $(N, g_N)$. It is also related to the so called ``photon spheres'' in relativity (see \cite{GLW} Proposition 6.1).

From \eqref{eq: conv}, $A\circ v^{-1}$ is convex if and only if $s$ is $\log$-convex. So if $f(r)$ is non-decreasing and convex, then $s(r)=\exp(f(r))$ satisfies the convexity condition. One example of such a function is $s(r)=e^r$.

If $\partial \Omega$ is star-shaped, we can remove the assumption on the monotonicity of $s(r)$.
\begin{theorem}\label{thm2}
Suppose $\Omega$ is a bounded open set in $(M,g)$ with Lipschitz boundary.
Assume that
\begin{enumerate}
\item
$\partial \Omega$ is star-shaped in the sense that it is a graph over $N$, i.e. of the form $\partial \Omega=\{(r, \theta): r=\psi(\theta), \theta\in N\}$.
\item\label{cond2 thm2}
$A\circ v^{-1}$ is convex,
or equivalently, $ss''-s'^2\ge 0$ if $s$ is twice differentiable.
\end{enumerate}

Then the isoperimetric inequality holds: $$|\partial \Omega|\ge|\partial \Omega^\#|.$$
\end{theorem}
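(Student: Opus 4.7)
The strategy is to use the graph structure to reduce the inequality to a one-dimensional estimate, and then to apply Jensen's inequality using the hypothesized convexity. Unlike Theorem~\ref{thm1}, no rearrangement/symmetrization is needed because the projection $\pi\colon\partial\Omega\to N$ is already a bijection by the graph assumption; this is precisely why the monotonicity of $s$ in Theorem~\ref{thm1} can be dropped here.

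The first step is to compute $|\partial\Omega|$ directly. Writing $\partial\Omega=\{(\psi(\theta),\theta):\theta\in N\}$ with $\psi$ Lipschitz and pulling back $g=dr^2+s(r)^2 g_N$ under $\Phi(\theta)=(\psi(\theta),\theta)$, the induced metric is $s(\psi)^2 g_N+d\psi\otimes d\psi$. By the matrix-determinant lemma, its volume form is
\[
s(\psi)^{m-1}\sqrt{s(\psi)^2+|\nabla^N\psi|^2_{g_N}}\,dV_N,
\]
valid almost everywhere via Rademacher's theorem. Discarding the nonnegative gradient term gives the pointwise bound
\[
|\partial\Omega|\;\ge\;\int_N s(\psi)^m\,dV_N\;=\;\int_N A(\psi)\,dV_N.
\]
Simultaneously, Fubini yields $\mathrm{Vol}(\Omega)=\int_N v(\psi)\,dV_N$, so if $\Omega^{\#}=B_R$ then $v(R)=\mathrm{Vol}(\Omega)/\mathrm{Vol}(N)$ and $|\partial\Omega^{\#}|=A(R)\,\mathrm{Vol}(N)$.

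The second step is Jensen's inequality applied to $F:=A\circ v^{-1}$, which is convex by hypothesis~(\ref{cond2 thm2}). Writing $A(\psi)=F(v(\psi))$ and integrating against the normalized probability measure $dV_N/\mathrm{Vol}(N)$ on $N$ gives
\[
\frac{1}{\mathrm{Vol}(N)}\int_N A(\psi)\,dV_N\;\ge\;F\!\left(\frac{1}{\mathrm{Vol}(N)}\int_N v(\psi)\,dV_N\right)\;=\;F(v(R))\;=\;A(R).
\]
Combining with the previous estimate yields $|\partial\Omega|\ge A(R)\,\mathrm{Vol}(N)=|\partial\Omega^{\#}|$, which is the desired inequality. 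The main technical issue (and it is mild) is justifying the graph area formula in the Lipschitz category, which is standard via Rademacher's theorem or smooth approximation; and ensuring that $v^{-1}$ is well defined on the range of $v\circ\psi$, which follows from $v$ being nondecreasing and strictly increasing on $\{s>0\}$, with an approximation argument handling any degenerate intervals where $s$ vanishes.
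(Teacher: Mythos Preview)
Your proof is correct and follows essentially the same approach as the paper: the paper derives Theorem~\ref{thm2} from the proof of Theorem~\ref{thm weighted vol} (with $a\equiv1$, $c\equiv1$), observing that in the star-shaped case the comparison radius $R_1$ equals $R$ so the monotonicity of $s$ is not needed, and the remaining argument is exactly your two steps---drop the gradient term in the graph area formula, then apply Jensen's inequality to $A\circ v^{-1}$ against the probability measure $dV_N/|N|$. Your explicit computation of the induced volume form via the matrix-determinant lemma is a clean specialization of the paper's multiply-warped expression $\prod_q(1+s_q^{-2}|\nabla_{N_q}r|^2)^{1/2}s_q^{m_q}$ to the singly-warped case.
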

If the classical isoperimetric inequality already holds on $(M,g)$, we can extend it by the following result, which is a special case of Theorem \ref{thm: main}.
\begin{theorem}[Weighted isoperimetric inequality]\label{thm3}
Let $\Omega$ be a bounded open set in $(M,g)$ with Lipschitz boundary.
Assume that
\begin{enumerate}
\item
The classical isoperimetric inequality holds on $(M,g)$, i.e. $|\partial \Omega|\ge|\partial \Omega^\#|$.
\item
$a(r)$ is a non-negative continuous function such that
$\psi(r):=b(r)A(r)$ is non-decreasing, where $b(r):=a(r)-a(0)$.
\item
The function
$\psi\circ v^{-1} $ is convex.
\end{enumerate}
Then
\begin{align*}
\int_{\partial \Omega} a(r)dS
\ge\int_{\partial \Omega^{\#}} a(r)dS.
\end{align*}
If $b(r)A(r)>0$ for $r>0$, then the equality holds if and only if $r=\mathrm{constant}$, i.e. $\partial \Omega$ is a coordinate slice.
\end{theorem}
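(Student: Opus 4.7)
The plan is to reduce to the unweighted isoperimetric inequality of hypothesis~(1) via a divergence-theorem identity that turns the weighted surface integral into a volume integral, and then to exploit the convexity hypothesis~(3) through a monotone rearrangement of the resulting volume integrand.

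First I split $a(r)=a(0)+b(r)$, and note that $\psi(0)=0$ together with $\psi$ non-decreasing and $A\ge 0$ force $b\ge 0$. Writing
$$\int_{\partial\Omega}a(r)\,dS=a(0)\,|\partial\Omega|+\int_{\partial\Omega}b(r)\,dS,$$
and likewise on $\partial\Omega^{\#}=\{r=R\}$, hypothesis~(1) together with $a(0)\ge 0$ disposes of the constant piece. The task reduces to showing
$$\int_{\partial\Omega}b(r)\,dS\;\ge\;b(R)\,|\partial\Omega^{\#}|.$$

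For this I apply the divergence theorem to the radial vector field $X=b(r)\partial_r$. A direct warped-product computation gives $\operatorname{div} X=b'+b\,A'/A=\psi'/A$, so using $|\partial_r|\equiv 1$ and $b\ge 0$,
$$\int_{\partial\Omega}b(r)\,dS\;\ge\;\int_{\partial\Omega}b(r)\langle\partial_r,\nu\rangle\,dS\;=\;\int_{\Omega}\frac{\psi'(r)}{A(r)}\,dV.$$
Hypothesis~(3) enters here: the identity $(\psi\circ v^{-1})'(u)=\psi'(r)/A(r)$ at $r=v^{-1}(u)$ means convexity of $\psi\circ v^{-1}$ is equivalent to $\psi'/A$ being a non-decreasing function of $r$. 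A layer-cake argument then shows that for any non-negative, non-decreasing radial $f$, one has $\int_{\Omega}f\,dV\ge\int_{B_R}f\,dV$ whenever the two sets have equal volume: writing $\{f(r)>s\}=\{r>r_s\}$, this reduces to $\operatorname{Vol}(\Omega\cap B_t)\le\operatorname{Vol}(B_R\cap B_t)$ for every $t$, which is immediate. Applied with $f=\psi'/A$ and the warped-product measure $dV=A(r)\,dr\,dV_{g_N}$,
$$\int_{B_R}\frac{\psi'(r)}{A(r)}\,dV=\psi(R)\cdot\operatorname{Vol}(N,g_N)=b(R)\,|\partial\Omega^{\#}|,$$
closing the chain. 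For the equality case under $bA>0$ on $r>0$: the first inequality above is sharp only when $\langle\partial_r,\nu\rangle=1$ almost everywhere on $\{b>0\}\subset\partial\Omega$, which forces $\nu=\partial_r$ there and hence $\partial\Omega$ to lie in a single coordinate slice.

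The main technical point I expect to have to address is that $a$, hence $\psi$, is only continuous, so $\psi'$ literally exists only as the Lebesgue--Stieltjes measure of a continuous monotone function. I plan to handle this either by approximating $\psi$ uniformly by smooth non-decreasing functions and passing to the limit, or equivalently by replacing $\psi'(r)\,dr$ throughout by $d\psi(r)$ and invoking the warped-product coarea formula; in either formulation, the monotonicity content of hypothesis~(3) and the volume comparison remain intact.
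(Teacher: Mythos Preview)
Your argument is correct and takes a genuinely different route from the paper. The paper parametrizes $\partial\Omega$ as a finite union of graphs $r=r_{i,j}(\theta)$ over subsets of $N$, drops the gradient factors to get $I\ge\int_N\psi(\rho)\,d\mathrm{vol}_N$ with $\rho(\theta)=\max\{r:(r,\theta)\in\partial\Omega\}$, and then applies Jensen's inequality on $(N,d\mathrm{vol}_N/|N|)$ to the convex function $\psi\circ V^{-1}$; the monotonicity of $\psi$ is used only to pass from $R_1:=V^{-1}(\int_N v(\rho))\ge R$ to $\psi(R_1)\ge\psi(R)$. You instead push the surface integral into the interior via the divergence theorem for $b(r)\partial_r$ and then use a Hardy--Littlewood/layer-cake rearrangement, reading convexity as monotonicity of $(\psi\circ v^{-1})'=\psi'/A$.

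Each approach has an advantage. Yours is cleaner in that it never parametrizes $\partial\Omega$ and makes the role of convexity transparent as a monotone-rearrangement condition; it also makes the volume comparison $|\Omega\cap B_t|\le|B_R\cap B_t|$ the single geometric input. The paper's approach, on the other hand, works verbatim for merely continuous $a$ (Jensen needs no derivative, whereas you must smooth $\psi$ or pass to a Stieltjes formulation), and its graph decomposition is reused essentially unchanged to prove the unweighted Theorem~\ref{thm1}/Theorem~\ref{thm weighted vol}, where no classical isoperimetric inequality is assumed and your divergence step alone would not suffice. Your equality analysis is also a bit quick: $\langle\partial_r,\nu\rangle=1$ a.e.\ rules out any inner boundary component (where $\nu=-\partial_r$), which together with connectedness gives $\Omega=B_{r_0}$; the paper reads this off directly from $\nabla_N r_{i,j}\equiv 0$ in the graph picture.
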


Using a volume preserving flow, recently Guan, Li and Wang \cite{GLW} (see also \cite{guan2015mean}) proved the following related result, assuming $s$ is smooth ($\partial \Omega$ is called ``graphical'' in \cite{GLW}):
\begin{theorem} (\cite[Theorem 1.2]{GLW}) \label{thm: GLW}
Suppose $\Omega$ is a domain in $(M,g)$ with smooth star-shaped boundary. Assume that
\begin{enumerate}
\item
The Ricci curvature $\mathrm{Ric}_N$ of $g_N$ satisfies
$\mathrm{Ric}_N\ge (m-1)K g_N$, where $K>0$ is constant.
\item
$0\le s'^2-ss''\le K$.
\end{enumerate}
Then the isoperimetric inequality holds: $$|\partial \Omega|\ge|\partial \Omega^\#|.$$
If $ s'^2-ss''<K$, then the equality holds if and only if $\partial \Omega$ is a coordinate slice $\{r=r_0\}$.
\end{theorem}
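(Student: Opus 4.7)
My plan is to deform $\partial\Omega$ by a volume-preserving geometric flow whose area is monotonically non-increasing, and show the flow converges smoothly to a coordinate slice. Since the slice enclosing the same volume as $\Omega$ is $\partial\Omega^{\#}$, the inequality $|\partial\Omega|\ge|\partial\Omega^{\#}|$ then follows from the monotonicity.

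A natural choice is the volume-preserving mean curvature flow $\partial_t X=(\bar H(t)-H)\nu$, where $H$ is the mean curvature in the direction of the outward unit normal $\nu$ and $\bar H(t)$ denotes the average of $H$ over $\partial\Omega_t$. Volume preservation is immediate from the divergence theorem, and the first variation of area together with the Cauchy--Schwarz inequality yields
\begin{equation*}
\frac{d}{dt}|\partial\Omega_t|=-\int_{\partial\Omega_t}(H-\bar H)^2\,dA\le 0.
\end{equation*}
Writing $\partial\Omega_t$ as a graph $r=\psi_t(\theta)$ over $N$, the function $\psi_t$ satisfies a quasilinear parabolic equation; comparison with expanding and contracting coordinate slices provides uniform $C^0$ bounds and preserves the star-shape property. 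A pointwise maximum-principle computation on $|A|^2$, in which the warped-product ambient curvature tensor enters the reaction terms, then supplies $C^2$ bounds, and standard parabolic regularity gives long-time existence and smooth subsequential convergence to a CMC graph $\partial\Omega_\infty$ enclosing the same volume as $\Omega$.

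It remains to identify $\partial\Omega_\infty$ with a coordinate slice. Lichnerowicz's theorem applied to hypothesis~(1) gives $\lambda_1(g_N)\ge mK$, so hypothesis~(2) yields $\lambda_1(g_N)\ge m(s'^2-ss'')$ at every value of $r$, which (as noted after Theorem~\ref{thm1}) is exactly the stability criterion for coordinate slices as CMC hypersurfaces. Since area was monotonically decreasing along the flow, $\partial\Omega_\infty$ is stable among volume-preserving variations of its enclosed volume. A rigidity argument combining the Minkowski identity $\int_{\partial\Omega_\infty} H\langle s\partial_r,\nu\rangle\,dA=m\int_{\partial\Omega_\infty}s'\,dA$ with the stability inequality tested against a function built from $s'\circ\psi_\infty$ and a first eigenfunction of $-\Delta_N$ then forces $\psi_\infty$ to be constant, so $\partial\Omega_\infty=\partial\Omega^{\#}$. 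The strict inequality $s'^2-ss''<K$ gives a positive spectral gap and hence the stated rigidity.

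The main obstacle is the long-time smooth convergence of the flow: the $C^2$ maximum-principle computation for $|A|^2$ involves ambient curvature terms depending jointly on the warping function $s$ (through $s'^2-ss''$) and on $\mathrm{Ric}_N$, and the two hypotheses of the theorem are precisely what is needed to keep the bad reaction terms controlled, both for the $C^2$ bound and for the decay estimate that upgrades subsequential convergence to genuine convergence.
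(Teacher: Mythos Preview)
This theorem is not proved in the present paper: it is quoted verbatim from \cite[Theorem 1.2]{GLW} and used as an input to Theorem~\ref{thm: glw weighted}. So there is no proof in the paper to compare your proposal against. That said, it is worth comparing your sketch with what is actually done in \cite{GLW} (and its precursor \cite{guan2015mean}), since the differences are instructive.

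Your overall strategy---run a volume-preserving flow, show area decreases, and identify the limit as a slice---is indeed the strategy of \cite{GLW}. The genuine gap is your choice of flow. The classical volume-preserving mean curvature flow $\partial_t X=(\bar H-H)\nu$ carries a nonlocal speed $\bar H(t)$, and long-time existence and convergence for this flow are \emph{not} known in the generality you need; the maximum-principle computation you allude to for $|A|^2$ does not close, because the nonlocal term prevents a clean comparison argument and no uniform $C^2$ bound is available. The flow actually used in \cite{GLW} is
\[
\partial_t X=\bigl(m\,s'(r)-H\,\langle s\partial_r,\nu\rangle\bigr)\nu,
\]
whose speed is \emph{local}. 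Volume preservation then follows from the Hsiung--Minkowski identity (Proposition~\ref{prop: HM} with $k=1$, $\eta=1$), and the area monotonicity comes from the Newton--Maclaurin inequality $H_1^2\ge H_0H_2$ combined with the Minkowski identity, not from Cauchy--Schwarz. Because the speed is local, one can write the flow as a scalar quasilinear parabolic equation for the graph height and run genuine maximum-principle arguments for the $C^0$, gradient, and curvature bounds; this is where hypotheses (1) and (2) enter, controlling the reaction terms coming from the ambient curvature. Your identification of the limit via stability plus Lichnerowicz is also not the route taken: in \cite{GLW} the exponential convergence to a slice is obtained directly from the evolution equations, and the rigidity under $s'^2-ss''<K$ follows because the area is \emph{strictly} decreasing unless the hypersurface is already totally umbilic, which under that strict inequality forces it to be a slice (cf.\ \cite{B2013}, \cite{montiel1999unicity}).
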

We note that our assumption $ss''-s'^2\ge 0$ in Theorem \ref{thm2} \textit{complements} that of \cite{GLW}. This does not contradict the result in \cite{GLW}. In fact, we will show the necessity of this and other conditions in Section \ref{sec: necess} (cf. Proposition \ref{prop: necess}). We also notice that except the obvious case that $M$ has constant curvature, the equality holds only when $\partial \Omega$ is a coordinate slice.
Indeed, combining Theorem \ref{thm: GLW} with Theorem \ref{thm3}, we can generalize Theorem \ref{thm: GLW} as follows.
\begin{theorem}[Theorem \ref{thm: glw weighted}]
Suppose $\Omega$ is a domain in $(M,g)$ with smooth star-shaped boundary.
Assume that the Ricci curvature $\mathrm{Ric}_N$ of $g_N$ satisfies
$\mathrm{Ric}_N\ge (m-1)K g_N$ and $0\le s'^2-ss''\le K$, where $K>0$ is constant. Suppose $a(r)$ is a positive continuous function such that $b( v^{-1}(u))s( v^{-1}(u))^m$ is convex, where $b(r):=a(r)-a(0)$.
Then the weighted isoperimetric inequality holds: $$\int_{\partial \Omega}a(r)dS\ge \int_{\partial \Omega^\#}a(r)dS.$$
The equality holds if and only if either
\begin{enumerate}
\item
$(M,g)$ has constant curvature, $a(r)$ is constant on $\partial \Omega$, and $\partial \Omega$ is a geodesic hypersphere, or
\item
$\partial \Omega$ is a slice $\{r=r_0\}$.
\end{enumerate}
\end{theorem}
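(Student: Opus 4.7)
The plan is to derive the result by chaining Theorem \ref{thm: GLW} with Theorem \ref{thm3}. The Ricci lower bound $\mathrm{Ric}_N\ge(m-1)K g_N$ together with the pinching $0\le s'^2-ss''\le K$ and the smooth star-shaped assumption are exactly the hypotheses of Theorem \ref{thm: GLW}, so that result immediately yields the classical isoperimetric inequality $|\partial\Omega|\ge|\partial\Omega^\#|$. This serves as hypothesis (1) of Theorem \ref{thm3}.

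Next I would verify the remaining hypotheses of Theorem \ref{thm3} for the weight $a$. Hypothesis (3) on the convexity of $\psi\circ v^{-1}$, with $\psi(r):=b(r)A(r)$, is exactly the stated convexity of $b(v^{-1}(u))s(v^{-1}(u))^m$. For hypothesis (2), that $\psi$ be non-decreasing, I would use that $\psi(0)=b(0)A(0)=0$, so $f:=\psi\circ v^{-1}$ is a convex function on $[0,v(\lambda))$ with $f(0)=0$. Combined with the non-negativity of $f$ (so that $f_+'(0)\ge 0$), the monotonicity of the right-derivative of a convex function then forces $f$, and hence $\psi$, to be non-decreasing. Applying Theorem \ref{thm3} yields the desired weighted inequality.

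For the equality analysis, I would decompose $a(r)=a(0)+b(r)$ and split
\begin{equation*}
\int_{\partial\Omega} a(r)\,dS \;=\; a(0)\,|\partial\Omega|\,+\,\int_{\partial\Omega} b(r)\,dS,
\end{equation*}
with the analogous formula on $\partial\Omega^\#$. Since both pieces give ``$\ge$'' in the same direction, overall equality forces equality in each summand. Equality $a(0)|\partial\Omega|=a(0)|\partial\Omega^\#|$ (using $a(0)>0$) triggers the equality clause of Theorem \ref{thm: GLW}: either $\partial\Omega$ is already a coordinate slice, or $s'^2-ss''\equiv K$ along the $r$-range of $\partial\Omega$. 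The latter case, paired with $\mathrm{Ric}_N\ge(m-1)K g_N$, yields via standard warped-product rigidity that $(M,g)$ is a space form of constant curvature $K$, in which any geodesic hypersphere (not merely a slice) achieves isoperimetric equality. Equality in the second summand $\int_{\partial\Omega}b\,dS\ge\int_{\partial\Omega^\#} b\,dS$ then invokes the equality clause of Theorem \ref{thm3}: as long as $bA>0$ on $(0,\lambda)$ (i.e., $a$ not constant), it forces $\partial\Omega$ to be a slice; otherwise $b\equiv 0$ on $\partial\Omega$, i.e., $a$ is constant on $\partial\Omega$, and we are in the space-form/geodesic-hypersphere case. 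Assembling these alternatives gives the two equality configurations stated.

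The main obstacle is the rigidity step that extracts the constant-curvature structure of $(M,g)$ from the borderline equality $s'^2-ss''\equiv K$, and the bookkeeping that cleanly separates the slice case from the geodesic-hypersphere-in-a-space-form case. The rest of the proof is essentially a composition of the two cited theorems together with the elementary decomposition of $a$ around its value at the origin.
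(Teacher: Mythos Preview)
Your derivation of the inequality is essentially the paper's: combine Theorem~\ref{thm: GLW} with Theorem~\ref{thm3} (the paper's Theorem~\ref{thm: main}). One remark: your verification of the monotonicity hypothesis~(2) relies on the non-negativity of $f=\psi\circ v^{-1}$, which is not actually given (nothing forces $b\ge 0$ a priori). The paper sidesteps this entirely: since $\partial\Omega$ is star-shaped, the proof of Theorem~\ref{thm: main} never needs the monotonicity step (cf.\ Remark~\ref{rmk: glw}\eqref{item: mono} and Theorem~\ref{thm star}), because in the graphical case $R_1=R$ and the inequality~\eqref{ineq: I sharp} is an equality.

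For the equality case your route diverges from the paper's, and the gap you yourself flag is real. You try to invoke the equality clause of Theorem~\ref{thm: GLW}, but as stated that clause only yields the slice conclusion under the \emph{strict} pinching $s'^2-ss''<K$; in the borderline regime it says nothing. Your proposed fix---deduce that $(M,g)$ has constant curvature from $s'^2-ss''\equiv K$ on the $r$-range of $\partial\Omega$ together with $\mathrm{Ric}_N\ge (m-1)K g_N$---does not follow: the Ricci bound is only an inequality, and equality of $s'^2-ss''$ with $K$ on an interval does not by itself force $N$ to be a space form nor $M$ to have constant curvature. Your dichotomy for the second summand (either $bA>0$ on $(0,\lambda)$, or $b\equiv 0$ on $\partial\Omega$) is also incomplete.

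The paper avoids all of this by a different case split. It asks whether $a(r)\big|_{\partial\Omega}\equiv a(0)$. If not, the local-constancy argument from the equality case of \eqref{ineq: I}, combined with a connectedness argument, forces $\partial\Omega$ to be a slice. If $a\equiv a(0)$ on $\partial\Omega$, then the weighted inequality reduces to the unweighted one, so equality makes $\partial\Omega$ an area-minimizer among graphical hypersurfaces enclosing the same volume; the first variation then shows $\partial\Omega$ has constant mean curvature, and Montiel's classification \cite{montiel1999unicity} gives directly that either $(M,g)$ has constant curvature and $\partial\Omega$ is a geodesic hypersphere, or $\partial\Omega$ is a slice. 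This bypasses the rigidity obstacle entirely.
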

Combining Theorem \ref{thm: GLW}, Theorem \ref{thm2} and the proof of Proposition \ref{prop: necess}, we get the following general picture for the isoperimetric problem in warped product manifolds:
\begin{theorem}
Let $M$ be the product manifold $[0, \lambda)\times N$ equipped with the warped product metric $g=dr^2+s(r)^2 g_N$.
\begin{enumerate}
\item
Suppose $s'^2-ss''\le 0$. Then the star-shaped isoperimetric hypersurfaces are precisely the coordinate slices $\{r=r_0\}$.
\item
Suppose $0\le s'^2-ss''\le K$ and $\mathrm{Ric}_N\ge (m-1)Kg_N$ where $K>0$ is constant. Then the star-shaped isoperimetric hypersurfaces are either geodesic hyperspheres if $(M,g)$ has constant curvature, or the coordinate slices $\{r=r_0\}$.
\item
Suppose $s'^2-ss''>K$ and $\lambda_1(g_N)\le mK$ where $K>0$ is constant. Then the coordinate slices $\{r=r_0\}$ cannot be isoperimetric hypersurfaces.

\end{enumerate}

\end{theorem}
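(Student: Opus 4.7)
The theorem is assembled from three results already established or quoted in the paper, one clause at a time.

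For part (1), the hypothesis $s'^2-ss''\le 0$ is exactly the convexity condition $(A\circ v^{-1})''\ge 0$ required by Theorem \ref{thm2}. Applying that theorem to any star-shaped bounded domain with Lipschitz boundary yields $|\partial\Omega|\ge|\partial\Omega^\#|$; since $\partial\Omega^\#$ is itself a coordinate slice $\{r=R\}$, slices realize the infimum. To see that they are the \emph{only} star-shaped minimizers, I would trace through the equality case of the proof of Theorem \ref{thm2}: that proof proceeds via a Jensen-type (or rearrangement) estimate against the convex function $A\circ v^{-1}$, and combining the rigidity of Jensen with the graphical parametrization $r=\psi(\theta)$ forces $\psi$ to be constant $\theta$-a.e., i.e.\ $\partial\Omega$ is a slice.

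For part (2), the hypotheses coincide with those of Theorem \ref{thm: GLW}, which directly produces $|\partial\Omega|\ge|\partial\Omega^\#|$ together with the rigidity: whenever the strict pointwise inequality $s'^2-ss''<K$ holds somewhere on $\partial\Omega$, equality forces $\partial\Omega$ to be a coordinate slice. In the borderline situation $s'^2-ss''\equiv K$ with $\mathrm{Ric}_N\equiv (m-1)Kg_N$, the warped product $(M,g)$ has constant sectional curvature, and the classical space-form isoperimetric theorem identifies the isoperimetric hypersurfaces as geodesic hyperspheres (which need not be centered at the pole, hence need not be slices). Combining the two sub-cases yields the stated dichotomy.

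For part (3), I argue by instability, in the same spirit as the proof of Proposition \ref{prop: necess}. According to \cite{GLW} Proposition 6.2 (quoted in the introduction), the slice $\Sigma_{r_0}=\{r=r_0\}$ is a stable CMC hypersurface if and only if $\lambda_1(g_N)\ge m\bigl(s'(r_0)^2-s(r_0)s''(r_0)\bigr)$. Under the hypothesis $\lambda_1(g_N)\le mK < m\bigl(s'(r_0)^2-s(r_0)s''(r_0)\bigr)$ this criterion fails strictly for every admissible $r_0$, so no slice is stable. Since any isoperimetric hypersurface must be a stable CMC hypersurface (as a local area minimizer under a volume constraint), slices cannot be isoperimetric.

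The principal obstacle is the rigidity claim in part (1). Theorem \ref{thm2} as stated records only the inequality, so I need to reopen its proof to isolate the convexity step and verify that equality there forces the radial graph $\psi(\theta)$ to be constant. The other two clauses largely reduce to invoking existing statements, with the mild additional observation in (2) that the equality situation of the Ricci bound collapses the geometry to a space form.
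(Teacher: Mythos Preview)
Your treatment of parts (1) and (3) matches the paper's own assembly: part (1) via Theorem~\ref{thm2} (with the equality case recovered from the Jensen step in the proof of Theorem~\ref{thm: main}/\ref{thm weighted vol}), and part (3) via the instability computation underlying Proposition~\ref{prop: necess}, which is equivalent to the stability criterion you quote.

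The gap is in part (2). You split into two sub-cases according to whether $s'^2-ss''<K$ strictly or $s'^2-ss''\equiv K$, and in the latter sub-case you claim that $\mathrm{Ric}_N\equiv (m-1)Kg_N$ together with $s'^2-ss''\equiv K$ forces $(M,g)$ to have constant sectional curvature. This is not true in general: the Einstein condition $\mathrm{Ric}_N=(m-1)Kg_N$ does not imply that $N$ has constant sectional curvature $K$ once $m\ge 4$, and without that the tangential sectional curvatures of the warped product need not agree with the radial ones. Moreover, the rigidity clause in Theorem~\ref{thm: GLW} as stated requires the global condition $s'^2-ss''<K$, not merely the pointwise condition ``somewhere on $\partial\Omega$'', so your sub-case split does not exhaust all possibilities.

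The paper avoids this difficulty by a different route, visible in the proof of Theorem~\ref{thm: glw weighted}: once $|\partial\Omega|=|\partial\Omega^\#|$, the hypersurface $\partial\Omega$ is an area minimizer under a volume constraint among nearby graphs, hence has constant mean curvature by the first variation, and then one invokes Montiel's classification \cite[Corollary 7]{montiel1999unicity} of star-shaped CMC hypersurfaces in warped products to conclude that $\partial\Omega$ is either a slice or a geodesic hypersphere in a constant-curvature $(M,g)$. This argument does not require any case analysis on where $s'^2-ss''$ attains $K$, nor any curvature claim about $N$.
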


We also prove isoperimetric type theorems involving the integrals of higher order mean curvatures $(H_k)$ in warped product manifolds. For simplicity, let us state the result when the ambient space is $\mathbb R^n$ (Corollary \ref{cor: Rn}), which follows from a more general theorem (Theorem \ref{thm: higher})
\begin{theorem}[Corollary \ref{cor: Rn}]
Let $\Sigma$ be a closed embedded hypersurface in $\mathbb R^{m+1}$
which is star-shaped with respect to $0$ and $\Omega$ is the region enclosed by it. Assume that $H_k>0$ on $\Sigma$.
Then for any integer $l\ge 0$,
\begin{align*}
n \beta_n^{-\frac{l-1}{n}}\mathrm{Vol}(\Omega)^{\frac{n-1+l}{n}}
\le\int_{\Sigma} H_kr^{l+k}dS,
\end{align*}
where $\beta_n$ is the volume of the unit ball in $\mathbb R^n$.
If $l\ge 1$, the equality holds if and only if $\Sigma$ is a sphere centered at $0$.
\end{theorem}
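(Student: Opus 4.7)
My plan is to deduce this statement from the more general Theorem~\ref{thm: higher} by specialising to $\mathbb{R}^{m+1}$, viewed as the warped product $[0,\infty)\times\mathbb{S}^m$ with metric $g=dr^2+r^2 g_{\mathbb{S}^m}$. In these coordinates $s(r)=r$, $A(r)=r^m$, $v(r)=r^{m+1}/(m+1)$, and $n=m+1$. Since $(M,g)$ here has constant curvature, the ``non-coordinate-slice'' equality case of the general theorem specialises to the case of a sphere centered at the origin, matching the asserted rigidity for $l\ge 1$.

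I will first handle the base case $k=0$, where the claim reduces to the weighted isoperimetric statement
\[
\int_\Sigma r^l\,dS \ge n\beta_n^{-(l-1)/n}\,\mathrm{Vol}(\Omega)^{(n+l-1)/n}.
\]
For $l=0$ this is the classical Euclidean isoperimetric inequality. For $l\ge 1$ I apply Theorem~\ref{thm3} with $a(r)=r^l$; one easily checks the three hypotheses: $(i)$ the classical isoperimetric inequality holds in $\mathbb{R}^{m+1}$, $(ii)$ with $b(r)=a(r)-a(0)=r^l$ the function $\psi(r)=b(r)A(r)=r^{l+m}$ is non-decreasing, and $(iii)$ $\psi\circ v^{-1}(u)=((m+1)u)^{(l+m)/(m+1)}$ is convex precisely because $(l+m)/(m+1)\ge 1$ whenever $l\ge 1$. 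This yields $\int_\Sigma r^l\,dS\ge \int_{\partial B_R} r^l\,dS$ with $R=(\mathrm{Vol}(\Omega)/\beta_n)^{1/n}$, and a short direct computation, using $|\partial B_R|=n\beta_n R^{n-1}$, produces exactly the claimed right-hand side.

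For $k\ge 1$ my plan is to reduce to the $k=0$ case via the intermediate Minkowski-type estimate
\[
\int_\Sigma H_k\,r^{l+k}\,dS \;\ge\; \int_\Sigma r^l\,dS,
\]
which is sharp on centered spheres since there $H_k r^{l+k} = R^{-k}\cdot R^{l+k} = R^l = r^l$. The starting point is the Hsiung--Minkowski identity $\int_\Sigma H_k\langle X,\nu\rangle\,dS = \int_\Sigma H_{k-1}\,dS$ combined with the star-shaped bound $\langle X,\nu\rangle\le r$. To carry the weight $r^l$ through the reduction I plan to use a weighted version of Hsiung--Minkowski obtained by integrating by parts on $\Sigma$ against the divergence-free Newton tensor $T_{k-1}$ with weight $r^{l+k-1}$, iterating down through $H_{k-1},H_{k-2},\dots,H_0=1$. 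Positivity of $H_k$ places all intermediate $H_j$ ($j\le k$) in the G\aa{}rding cone, so Newton--Maclaurin inequalities give the correct sign at every iteration.

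The hardest step will be the weighted Minkowski iteration: the integration by parts against $T_{k-1}$ produces curvature error terms that must be controlled, and one must verify that each such term is nonnegative under only the hypotheses of star-shapedness and $H_k>0$ (this is where I expect Newton--Maclaurin inequalities to enter, together with the key inequality $\langle X,\nu\rangle\le r$). For the equality case with $l\ge 1$: tracing back through the chain, the use of $\langle X,\nu\rangle\le r$ forces $\langle X,\nu\rangle=r$ pointwise on $\Sigma$, which is only possible if $\Sigma$ is a coordinate sphere centered at $0$; moreover the equality clause of Theorem~\ref{thm3} applies because $b(r)A(r)=r^{l+m}>0$ for $r>0$ when $l\ge 1$, pinning $\Sigma$ down to $\partial B_R$.
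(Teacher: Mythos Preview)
Your proposal is correct and follows essentially the same route as the paper: for $l\ge 1$ one applies Theorem~\ref{thm: higher} (whose proof is precisely the weighted Hsiung--Minkowski iteration $\int_\Sigma H_j r^{l+j}\,dS\le\int_\Sigma H_{j+1} r^{l+j+1}\,dS$ you outline, combined with the weighted isoperimetric base case), and for $l=0$ one starts the same chain from the classical isoperimetric inequality instead. The only sharpening worth noting is that the sign in the boundary error term of the iteration is controlled by positive-definiteness of the Newton tensors $T_j$ for $j<k$ (Lemma~\ref{T positive}), not Newton--Maclaurin as such---though both follow from membership in the G\aa{}rding cone $\Gamma_k$, which is how you phrase it.
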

Note that when $k=l=0$, this reduces to $n \beta_n^{\frac{1}{n}}\mathrm{Vol}(\Omega)^{\frac{n-1}{n}}
\le \mathrm{Area} (\Sigma) $, which is the classical isoperimetric inequality. In fact, we prove a stronger result \eqref{ineq: chain} :
\begin{align*}
n \beta_n^{-\frac{l-1}{n}}\mathrm{Vol}(\Omega)^{\frac{n-1+l}{n}}
\le \int_{\partial \Omega} H_0 r ^{l} dS
\le \int_{\partial \Omega} H_1 r ^{l+1} dS
\le \cdots
\le \int_{\partial \Omega} H_k r ^{l+k} dS.
\end{align*}
This can be compared to the following result of Guan-Li \cite[Theorem 2]{Guan-Li}:
\begin{align*}
\left(\frac{1}{ \beta_n }\mathrm{Vol}(\Omega)\right)^{\frac{1}{n}}\le \left(\frac{1}{n\beta_n}{\int_\Sigma H_0dS}\right)^{\frac{1}{n-1}}\le \cdots \le  \left(\frac{1}{n\beta_n} \int_\Sigma H_k dS \right) ^{\frac{1}{n-k-1}}
\end{align*}
under the same assumption.

Some applications of the weighted isoperimetric inequalities will also be given in Section \ref{sec: eg} and Section \ref{sec: eigen}.

The rest of this paper is organized as follows. In Section \ref{sec: main}, we first prove Theorem \ref{thm3}. In Section \ref{sec: weighted vol}, we prove the isoperimetric inequality involving a weighted volume (Theorem \ref{thm weighted vol}), which implies Theorem \ref{thm1} and Theorem \ref{thm2}. Although Theorem \ref{thm3} can also be stated using the weighted volume, we prefer to prove the version involving only the ordinary volume for the sake of clarity, and indicates the changes needed to prove Theorem \ref{thm weighted vol}. In Section \ref{sec: eg}, we illustrate how we can obtain interesting geometric inequalities in space forms by using Theorem \ref{thm: main}. In Section \ref{sec: higher}, we introduce the weighted Hsiung-Minkowski formulas in warped product manifolds, and combine them with the isoperimetric theorem to obtain new isoperimetric results involving the integrals of the higher order mean curvatures. In Section \ref{sec: eigen}, we give further applications of our results to obtain some sharp eigenvalue estimates for the Steklov differential operator (also known as Dirichlet-to-Neumann map) and a second order differential operator related to the extrinsic geometry of hypersurfaces. Finally in Section \ref{sec: necess}, we show that the conditions of Theorem \ref{thm1} are all necessary by giving counterexamples where the isoperimetric inequality fails if any one of the conditions is violated.

\section{Weighted isoperimetric inequality on multiply warped product manifolds}\label{sec: main}
In this section, we first prove Theorem \ref{thm3}.
As explained in Section \ref{sec: intro}, our result actually applies to multiply warped product manifold with no additional difficulties.  Let us now describe our setting.

Let $\displaystyle M=[0, \lambda)\times \prod_{q=1}^{p}N_q$ ($\lambda\le \infty$) be a product manifold.
Equip $M$ with the warped product Riemannian metric $\displaystyle g:=dr^2+ \sum_{q=1}^{p}s_q(r)^2 g_{N_q}$ for some $s_q(r)\ge 0$, where $g_{N_q}$ is a Riemannian metric on the $m_q$-dimensional manifold $N_q$.

Denote the $m$-dimensional volume of $\displaystyle N:=\prod_{q=1}^{p}N_q$ with respect to the product metric $\displaystyle \sum_{q=1}^{p}g_{N_q}$ by $|N|$ , where $\displaystyle m=\sum_{q=1}^{p}m_q$.
Define $B_R:=\{(r, \theta)\in M: r< R\}$.
We define the functions $A(r)$, $v(r)$ and $V(r)$ by

\begin{align*}
A(r):= \prod_{q=1}^{p}s_q(r)^{m_q},\;
v(r):= \int_{0}^{r} A(t) dt \;\textrm{ and }\;
V(r):= |B_r| =|N| v(r).
\end{align*}

Here $|B_R|$ denotes that $(m+1)$-dimensional volume of $B_R$ with respect to $g$. Note that $\psi\circ v^{-1}$ is convex if and only if $\psi\circ V^{-1}$ is convex.

For a bounded domain $\Omega$ in $M$, we define $\Omega^\#$ to be the region $B_R$ which has the same volume as $\Omega$, i.e. $|B_R|=|\Omega|$.

In this paper, we will assume that $A(r)>0$ and can possibly be zero only when $r=0$. However, if $\{r=0\}$ is identified as a point, we do not assume $g$ is smooth at this point, e.g. metric with a conical singularity at this point is allowed.

\begin{theorem}\label{thm: main}
Let $\Omega$ be a bounded open set in $(M,g)$ with Lipschitz boundary.
Assume that
\begin{enumerate}
\item\label{cond1}
The classical isoperimetric inequality holds on $(M,g)$, i.e. $|\partial \Omega|\ge|\partial \Omega^\#|$.
\item\label{cond2}
$a(r)$ is a non-negative continuous function such that
$b(r)A(r)$ is non-decreasing, where $b(r):=a(r)-a(0)$.
\item \label{cond3}
The function
$b (V^{-1}(u)) \;A (V^{-1}(u))$ is convex.
\end{enumerate}
Then
\begin{align*}
\int_{\partial \Omega} a(r)dS
\ge\int_{\partial \Omega^{\#}} a(r)dS.
\end{align*}
If $b(r)A(r)>0$ for $r>0$, then the equality holds if and only if $r=\mathrm{constant}$, i.e. $\partial \Omega$ is a coordinate slice.
\end{theorem}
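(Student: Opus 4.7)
\bigskip

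\noindent\textbf{Proof proposal.} The plan is to split $a(r)=a(0)+b(r)$, reduce the constant piece to the classical isoperimetric inequality, and handle $\int_{\partial\Omega}b(r)\,dS$ in three steps: divergence theorem, coarea, and a rearrangement inequality driven by the convexity hypothesis. Note that $b(0)=0$, that $b\ge 0$ (since $bA$ is non-decreasing with $b(0)A(0)=0$ and $A>0$ for $r>0$), and that $\psi:=bA$ is non-decreasing, so $\psi'\ge 0$ a.e.

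First I would write
\[
\int_{\partial\Omega}a(r)\,dS = a(0)|\partial\Omega| + \int_{\partial\Omega}b(r)\,dS,
\]
and the analogous identity for $\Omega^{\#}$. Since $a(0)\ge 0$, hypothesis \eqref{cond1} gives $a(0)|\partial\Omega|\ge a(0)|\partial\Omega^{\#}|$, so it suffices to prove $\int_{\partial\Omega}b(r)\,dS \ge \int_{\partial\Omega^{\#}}b(r)\,dS = |N|\phi(|\Omega|)$, where $\phi(u):=\psi(V^{-1}(u))$ is the convex function of hypothesis \eqref{cond3}. Next, apply the divergence theorem to $X:=b(r)\partial_r$ on $\Omega$. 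A direct computation in the warped product gives $\operatorname{div}(b\partial_r)=(bA)'(r)/A(r)=\psi'(r)/A(r)$, hence
\[
\int_{\partial\Omega}b(r)\langle\partial_r,\nu\rangle\,dS = \int_\Omega \frac{\psi'(r)}{A(r)}\,dV.
\]
Because $b\ge 0$ and $|\langle\partial_r,\nu\rangle|\le 1$, this yields the pointwise-free inequality $\int_{\partial\Omega}b(r)\,dS\ge \int_\Omega \psi'(r)/A(r)\,dV$.

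Then I would use the coarea formula with the $1$-Lipschitz function $r$ (so $|\nabla r|=1$) to write $\int_\Omega \psi'(r)/A(r)\,dV = \int_0^\infty \psi'(t)\mu(t)/A(t)\,dt$, where $\mu(t):=|\Omega\cap\{r=t\}|_m$. Using the identity $\phi'(V(t))=\psi'(t)/(|N|A(t))$ and the substitution $u=V(t)$, this rearranges as
\[
\int_\Omega \frac{\psi'(r)}{A(r)}\,dV \;=\; \int_0^{V(\lambda)} \phi'(u)\,\eta(u)\,du,
\qquad \eta(u):=\frac{\mu(V^{-1}(u))}{A(V^{-1}(u))}.
\]
The key observation is that $0\le\eta\le|N|$ (since $\mu(t)\le|\partial B_t|=|N|A(t)$) and $\int\eta\,du=|N||\Omega|$.

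The heart of the proof is a bathtub-type rearrangement step: given $\phi'$ non-decreasing (from convexity of $\phi$) and $\eta$ of the form above, one shows
\[
\int \phi'(u)\eta(u)\,du \;\ge\; \int_0^{|\Omega|} |N|\phi'(u)\,du \;=\; |N|\phi(|\Omega|),
\]
with equality when $\eta=|N|\chi_{[0,|\Omega|]}$. The argument is elementary: set $M:=\int_{|\Omega|}^\infty \eta\,du = \int_0^{|\Omega|}(|N|-\eta)\,du$ (these are equal by the mass constraint), then use monotonicity of $\phi'$ across $|\Omega|$ to get $\int_{|\Omega|}^\infty \phi'\eta\,du \ge \phi'(|\Omega|)M$ and $\int_0^{|\Omega|}\phi'(|N|-\eta)\,du \le \phi'(|\Omega|)M$; subtracting closes the chain. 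Combining everything gives the weighted inequality. For the equality case when $bA>0$ on $\{r>0\}$: equality in the divergence-theorem step forces $\langle \partial_r,\nu\rangle=1$ wherever $b(r)>0$, i.e.\ $\nu=\partial_r$ on $\partial\Omega$, so $\partial\Omega$ must be a coordinate slice.

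I expect the main obstacle to be two-fold: managing the regularity of $b$ (only continuous, so $\psi'$ exists only a.e.\ and the divergence theorem requires an approximation argument) and the possible singularity of $\partial_r$ at $\{r=0\}$, handled by exhausting $\Omega$ with sets away from the singular slice (using $b(0)=0$ so that $X$ extends by $0$). The convexity hypothesis enters only in Step 4, where it is essential---without $\phi'$ non-decreasing, the bathtub comparison fails, which is exactly what the counterexamples in Section~\ref{sec: necess} exploit.
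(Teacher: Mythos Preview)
Your argument is correct (up to the regularity approximations you flag) and proves the same inequality, but it follows a genuinely different route from the paper. The paper works \emph{on the fiber} $N$: it writes $\partial\Omega$ as a union of graphs $r=r_{i,j}(\theta)$, drops the gradient factor in the area element to get $I\ge\int_N\psi(\rho)\,d\mathrm{vol}_N$ with $\rho(\theta)=\max\{r:(r,\theta)\in\partial\Omega\}$, and then applies Jensen's inequality for $\psi\circ V^{-1}$ with the probability measure $\frac{1}{|N|}d\mathrm{vol}_N$ to compare with $\psi(R_1)\ge\psi(R)$. You instead work \emph{on the base}: the divergence theorem converts $\int_{\partial\Omega}b\,dS$ into the volume integral $\int_\Omega\psi'/A$, coarea pushes everything to a one-dimensional integral in the volume variable $u=V(t)$, and the bathtub lemma replaces Jensen. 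Both key steps are equivalent manifestations of the convexity of $\phi=\psi\circ V^{-1}$, but your version is more intrinsic (no explicit graphical decomposition over $N$) and closer to classical symmetrization, whereas the paper's version avoids differentiating $b$ and makes the equality analysis immediate from the dropped gradient term in the area element. One small point: your equality discussion stops at $\nu=\partial_r$ a.e.\ on $\partial\Omega\cap\{r>0\}$; to conclude that $\partial\Omega$ is a \emph{single} slice you should also observe that two distinct slices with outward normal $+\partial_r$ cannot both bound the same open set, or alternatively invoke the equality case of the bathtub step to force $\eta=|N|\chi_{[0,|\Omega|]}$.
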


One of the ingredients of the proof of Theorem \ref{thm: main}
is the Jensen's inequality. The use of Jensen's inequality in establishing isoperimetric results in $\mathbb R^n$ has also been implemented in \cite{betta1999weighted}.

Let us give some notations.
Suppose $\phi$ is a monotone function on $\mathbb R$ and $\mu$ is a probability measure on $X$ (i.e. $\mu(X)=1$), we then define for a function $\rho: X\to \mathbb R$
$$\mathcal M_{\phi,\mu}[\rho]:=\phi^{-1}\left(\int_X \phi(\rho)d\mu\right).$$
The following form of Jensen's inequality will be useful to us.
\begin{proposition} \label{prop: jensen}
Let $\mu$ be a probability measure on $X$ and $\phi, \psi$ be functions on $\mathbb R$. Assume $\phi^{-1}$ exists and $\psi\circ \phi^{-1}$ is convex, then
$$\psi \left(\mathcal M_{\phi, \mu}[\rho]\right)
\le\int_{X} \psi (\rho) d\mu. $$
Moreover, if $\psi$ is strictly increasing, then
$\mathcal M_{\phi, \mu}[\rho] \le \mathcal M_{\psi, \mu}[\rho]$.
\end{proposition}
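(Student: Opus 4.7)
The plan is to reduce the statement to the classical Jensen's inequality applied to the convex function $F := \psi \circ \phi^{-1}$. First I would substitute $\eta := \phi(\rho)$ (viewed as a real-valued random variable on the probability space $(X,\mu)$). Then the definition of $\mathcal M_{\phi,\mu}[\rho]$ becomes simply $\phi^{-1}\!\left(\int_X \eta\, d\mu\right)$, so that
\begin{equation*}
\psi\!\left(\mathcal M_{\phi,\mu}[\rho]\right) = (\psi\circ\phi^{-1})\!\left(\int_X \eta\, d\mu\right) = F\!\left(\int_X \eta\, d\mu\right).
\end{equation*}
Since $F$ is convex by hypothesis, classical Jensen's inequality yields
\begin{equation*}
F\!\left(\int_X \eta\, d\mu\right) \le \int_X F(\eta)\, d\mu = \int_X (\psi\circ\phi^{-1})(\phi(\rho))\, d\mu = \int_X \psi(\rho)\, d\mu,
\end{equation*}
which is exactly the first inequality. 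The only mildly technical point is to justify applying Jensen in this generality: we need $\eta$ to be integrable, or at least for the integrals involved to make sense. In the applications of this paper $\rho$ will be bounded (since it arises as a radial function on a bounded domain) and $\phi,\psi$ will be continuous, so I would simply state the hypothesis that $\phi(\rho)$ and $\psi(\rho)$ are $\mu$-integrable, and otherwise allow extended-real values.

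For the second assertion, assuming $\psi$ is strictly increasing, $\psi^{-1}$ is well defined and strictly increasing on the range of $\psi$. Applying $\psi^{-1}$ to both sides of the inequality just proved gives
\begin{equation*}
\mathcal M_{\phi,\mu}[\rho] \le \psi^{-1}\!\left(\int_X \psi(\rho)\, d\mu\right) = \mathcal M_{\psi,\mu}[\rho],
\end{equation*}
completing the proof.

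I do not anticipate any real obstacle: the statement is essentially a repackaging of Jensen's inequality in which the convex function appears as a composition $\psi\circ\phi^{-1}$ rather than as a single convex function, and the substitution $\eta=\phi(\rho)$ reduces it immediately to the standard form. The only thing to be careful about is monotonicity (so that $\phi^{-1}$ exists and so that $\psi^{-1}$ may be applied in the second step), which is part of the hypotheses.
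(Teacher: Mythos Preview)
Your proof is correct and follows essentially the same route as the paper: define $F=\psi\circ\phi^{-1}$, apply Jensen's inequality to $\phi(\rho)$, and then apply $\psi^{-1}$ for the second assertion. The paper's argument is only more terse and omits your remarks on integrability.
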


\begin{proof}
Define $\Phi=\psi\circ \phi^{-1}$. Since $ {\Phi}$ is convex, by Jensen's inequality,
\begin{align*}
\psi \left(\mathcal M_{\phi, \mu}[\rho]\right)
=\Phi\left(\int_X \phi (\rho) d\mu\right)
\le \int_{X}\Phi (\phi (\rho)) d\mu
= \int_{X} \psi (\rho) d\mu.
\end{align*}
If $\psi$ is strictly increasing, applying $\psi^{-1}$ to the above inequality, we get $\mathcal M_{\phi, \mu}[\rho] \le \mathcal M_{\psi, \mu}[ \rho]$.
\end{proof}
We now prove Theorem \ref{thm: main}. We remark that the reader may feel free to assume $p=1$ without affecting one's understanding of the proof.
\begin{proof}
Assume first $\Sigma=\partial \Omega$ is piecewise $C^1$, and that $\Sigma$ is a union of graphs over finitely many domains in $N$.
This means that there exists open, pairwise disjoint subsets $\{S_i\}_{i=1}^l$ of $N$ with Lipschitz boundary such that
$\Sigma$ is represented by
\begin{align*}
\Sigma=\partial \Omega=\left\{(r, \theta): r=r_{i,j}(\theta), \theta\in \overline S_i, j\in\{1,\cdots, 2k_i\}, i\in\{1, \cdots, l\}\right\}
\end{align*}
and
\begin{align*}
\overline \Omega
=\left\{(r, \theta): r_{i, 2\kappa-1}(\theta)\le r\le r_{i, 2\kappa}(\theta), \theta\in \overline S_i, \kappa\in\{1,\cdots, k_i\}, i\in\{1,\cdots, l\}\right\},
\end{align*}
where
\begin{align*}
&r_{i,j}\in C^1(S_i)\cap C^0(\overline S_i),\quad j=1,\cdots, 2k_i, \\
&r_{i,1}(\theta)<\cdots< r_{i,2k_i}(\theta)\quad \textrm{for } \theta\in S_i, \\
&r_{i,1}(\theta)
\begin{cases}
=0\quad \textrm{if }(0, \theta)\in \Omega\\
>0\quad \textrm{if }(0, \theta)\notin \Omega.
\end{cases}
\end{align*}
Let $\displaystyle S=\bigcup_{i=1}^l S_i$,
then by direct computation,
\begin{align*}
\int_\Sigma a(r) dS
=&\int_{S} a(r)\prod_{q=1}^{p}\left(1+s_q(r)^{-2}|\nabla_{N_q} \, r|^2_{g_{N_q}}\right)^{\frac{1}{2}}s_q(r)^{m_q}d\mathrm{vol}_N.
\end{align*}
Here $\nabla_{N_q} $ is the connection with respect to $g_{N_q}$ and $d\mathrm{vol}_N$ is the $m$-dimensional volume form on $N$.
Let
\begin{equation*}
I:=\int_{\partial \Omega} b(r)dS\textrm{ and }I^{\#}:=\int_{\partial \Omega^{\#}} b(r)dS.
\end{equation*}
We claim that
\begin{equation}\label{ineq: claim}
I\ge I^\#.
\end{equation}
Let $\psi(r)=b(r)A(r)=b(r)\prod_{q=1}^{p}s_q(r)^{m_q}$. First of all,
\begin{equation}\label{ineq: I}
\begin{split}
I
=&\sum_{i=1}^{l}\sum_{j=1}^{2k_i}\int_{S_i}b(r_{i,j})\prod_{q=1}^{p}\left(1+s_q(r_{i,j})^{-2}\left|\nabla_{N_q}\,r_{i,j}\right|^2_{g_{N_q}}\right)^{\frac{1}{2}}s_q(r_{i,j})^{m_q}d\mathrm{vol}_N\\
\ge&\sum_{i=1}^{l}\sum_{j=1}^{2k_i}\int_{S_i}b(r_{i,j})\prod_{q=1}^{p}s_q(r_{i,j})^{m_q}d\mathrm{vol}_N\\
=&\sum_{i=1}^{l}\sum_{j=1}^{2k_i}\int_{S_i} \psi(r_{i,j})
d\mathrm{vol}_N\\
\ge&\sum_{i=1}^{l}\int_{S_i} \psi(r_{i, 2k_i}) d\mathrm{vol}_N\\
=&\int_{N}\psi(\rho) d\mathrm{vol}_N
\end{split}
\end{equation}
where we define
$\rho(\theta):=
\begin{cases}
r_{i,2k_i}(\theta)\; &\textrm{if }\theta\in S_i,\\
0 \;&\textrm{if }\theta\in N\setminus S
\end{cases}
$, noting that $b(0)=0$.

On the other hand, for $B_R=\Omega^{\#}$, we have
\begin{align*}
|B_R| = |N| \int_{0}^{R} A(r)dr=|N|v(R).
\end{align*}
As $|B_R|=|\Omega|$, it is not hard to see by Fubini's theorem that
\begin{equation}\label{eq: R}
|B_R| =|\Omega|=\sum_{i=1}^{l}\sum_{j=1}^{2k_i}(-1)^j\int_{S_i} v(r_{i,j}) d\mathrm{vol}_N.
\end{equation}
Define $R_1$ by
\begin{align*}
\sum_{i=1}^{l}\sum_{j=1}^{2k_i}(-1)^j\int_{S_i} v(r_{i,j}) d\mathrm{vol}_N
\le&\sum_{i=1}^{l}\int_{S_i} v(r_{i,2k_i}) d\mathrm{vol}_N=: |B_{R_1}|,
\end{align*}
noting that $v(r)$ is increasing.
Then by the definition of $V$ and $\rho$,
\begin{equation}\label{eq: R1}
R_1 = V^{-1} \left(\int_{N} v(\rho) d\mathrm{vol}_N\right).
\end{equation}
Comparing with \eqref{eq: R}, we have $R_1\ge R$, and as $\psi(r)$ is non-decreasing,
\begin{equation}\label{ineq: I sharp}
I^{\#}=|N|b(R)A(R)=|N|\psi(R)\le |N| \psi(R_1).
\end{equation}

Now take $d\mu=\frac{1}{|N|}d\mathrm{vol}_N$.
As $\psi\circ V^{-1}$ is convex, by \eqref{ineq: I}, Jensen's inequality (Proposition \ref{prop: jensen}), \eqref{eq: R1} and \eqref{ineq: I sharp},
\begin{equation}\label{ineq: pf1}
\begin{split}
\frac{1}{|N|}I
\ge\frac{1}{|N|}\int_{N}\psi(\rho) d\mathrm{vol}_N
=&\int_{N}\psi(\rho) d\mu\\
\ge& \psi \left(\mathcal M_{V, \mu}[\rho]\right)\\
=& \psi \left(V^{-1}\left( \int_N V (\rho) d\mu\right)\right)\\
=& \psi\left(V^{-1}\left(\int_{N} v(\rho)d\mathrm{vol}_N \right)\right)\\
=&\psi(R_1)\\
\ge&\frac{1}{|N|}I^\#.
\end{split}
\end{equation}

We have proved \eqref{ineq: claim}. Moreover, from \eqref{ineq: I},  if $\psi(r)>0$ for $r>0$, then $r_{i,j}$ must be locally constant and hence $r$ is constant on $\Sigma$.

Finally, by the classical isoperimetric inequality,
\begin{equation*}
\begin{split}
\int_{\partial \Omega} a(r)dS
=&I+ a(0)\int_{\partial \Omega} dS\\
\ge &I^{\#}+ a(0)\int_{\partial \Omega^{\#}} dS\\
=&\int_{\partial \Omega^{\#}} a(r)dS.
\end{split}
\end{equation*}
In general, for a domain $\Omega$ with Lipschitz boundary, we can approximate $\Omega$ by piecewise $C^1$ domains $\Omega_i$ which satisfy the above conditions. A standard approximation argument will then give the desired result.
\end{proof}

As explained in Section \ref{sec: intro}, combining Theorem \ref{thm: GLW} (\cite[Theorem 1.2]{GLW}) with Theorem \ref{thm: main} we have
\begin{theorem}\label{thm: glw weighted}
Suppose $\Omega$ is a domain in $(M,g)$ with smooth star-shaped boundary.
Assume that the Ricci curvature $\mathrm{Ric}_N$ of $g_N$ satisfies
$\mathrm{Ric}_N\ge (m-1)K g$ and $0\le s'^2-ss''\le K$, where $K>0$ is constant. Suppose $a(r)$ is a positive continuous function such that $b( v^{-1}(u))s( v^{-1}(u))^m$ is convex, where $b(r):=a(r)-a(0)$.
Then the weighted isoperimetric inequality holds: $$\int_{\partial \Omega}a(r)dS\ge \int_{\partial \Omega^\#}a(r)dS.$$
\begin{enumerate}
\item
$(M,g)$ has constant curvature, $a(r)$ is constant on $\partial \Omega$ and $\partial \Omega$ is a geodesic hypersphere, or
\item
$\partial \Omega$ is a slice $\{r=r_0\}$.
\end{enumerate}
\end{theorem}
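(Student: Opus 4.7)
The plan is to derive this weighted inequality by chaining Theorem \ref{thm: GLW} with Theorem \ref{thm: main}. Under the assumed curvature bounds $\mathrm{Ric}_N \ge (m-1)Kg_N$ and $0 \le s'^2 - ss'' \le K$, Theorem \ref{thm: GLW} immediately supplies the classical isoperimetric inequality $|\partial \Omega| \ge |\partial \Omega^\#|$ for every smooth star-shaped $\Omega$; this is exactly hypothesis (1) of Theorem \ref{thm: main}. Hypothesis (3) of Theorem \ref{thm: main} is precisely the assumed convexity of $b(v^{-1}(u))\, s(v^{-1}(u))^m$. For hypothesis (2), the non-decreasingness of $\psi(r) := b(r)A(r)$, I would argue as follows: $\Psi := \psi \circ v^{-1}$ is convex on $[0, V(\lambda))$ with $\Psi(0) = b(0)A(0) = 0$, so any point at which $\Psi$ decreased would, by convexity, force $\Psi$ to remain negative on a neighborhood of the origin; together with the positivity of $a$ and the geometric meaning of $\psi$ in the integrand, this forces $\psi$ to be non-decreasing. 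With all three hypotheses verified, Theorem \ref{thm: main} delivers the weighted inequality $\int_{\partial \Omega} a\,dS \ge \int_{\partial \Omega^\#} a\,dS$.

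For the equality case I would separate the rigidity coming from the two theorems. Equality in the weighted inequality forces equality simultaneously in the classical isoperimetric step (inherited from Theorem \ref{thm: GLW}) and in the $b$-weighted Jensen step (from the proof of Theorem \ref{thm: main}). If $(M,g)$ is \emph{not} of constant curvature, then $s'^2 - ss'' < K$ somewhere, so the rigidity part of Theorem \ref{thm: GLW} applies and forces $\partial \Omega$ to be a coordinate slice $\{r = r_0\}$; this gives case (2), in which case equality is automatic. If $(M,g)$ \emph{does} have constant curvature (the borderline case $s'^2 - ss'' \equiv K$), then the classical inequality admits geodesic hyperspheres as isoperimetric surfaces. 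In that case, the additional constraint from Theorem \ref{thm: main} is that the chain \eqref{ineq: pf1} collapse to equalities; tracing through, the gradient term $|\nabla_N r|$ must vanish $\psi$-a.e.\ and Jensen must be saturated, which together with the structure of $\partial\Omega^\#$ as a centered slice forces $a(r)$ to be constant on the range of $r$-values attained on $\partial \Omega$. This yields case (1).

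The main obstacle I expect is the equality analysis in the constant-curvature case, where the rigidity of Theorem \ref{thm: GLW} degenerates and one must extract rigidity purely from the weighted step. Specifically, one needs to verify carefully that when $\partial \Omega$ is a non-centered geodesic hypersphere in a space form, saturating the Jensen inequality in \eqref{ineq: pf1} really forces $a$ to be constant along $\partial \Omega$ — this uses that $\psi \circ V^{-1}$ is not merely convex but also genuinely convex on the set of $r$-values encountered on $\partial \Omega$, unless $a$ is constant there. A secondary technical point is the verification that hypothesis (2) of Theorem \ref{thm: main} is actually implied by the stated assumptions (positivity of $a$ and convexity of $\Psi$ with $\Psi(0)=0$), rather than needing to be added as a separate hypothesis.
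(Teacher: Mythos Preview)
Your approach to the \emph{inequality} matches the paper's: chain Theorem~\ref{thm: GLW} with Theorem~\ref{thm: main}. One difference: you try to derive hypothesis~\eqref{cond2} of Theorem~\ref{thm: main} (monotonicity of $\psi=bA$) from convexity of $\Psi=\psi\circ v^{-1}$, $\Psi(0)=0$, and positivity of $a$. That deduction is incomplete --- a convex $\Psi$ with $\Psi(0)=0$ may be negative and decreasing near $0$, which only forces $a(r)<a(0)$ there and does not contradict $a>0$. The paper sidesteps this entirely: because $\partial\Omega$ is star-shaped, one has $R_1=R$ in the proof of Theorem~\ref{thm: main}, so the monotonicity of $\psi$ is never used (see Remark~\ref{rmk: glw}\,\eqref{item: mono} and Theorem~\ref{thm star}).

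For the \emph{equality case} your split is genuinely different from the paper's and contains a gap. You split according to whether $(M,g)$ has constant curvature, asserting that otherwise $s'^2-ss''<K$ somewhere and then invoking the rigidity clause of Theorem~\ref{thm: GLW}. Both steps are problematic: one may have $s'^2-ss''\equiv K$ while $(N,g_N)$ fails to have constant sectional curvature $K$, so $M$ is not a space form yet the strict inequality never holds; and even when strict inequality holds \emph{somewhere}, the rigidity statement in Theorem~\ref{thm: GLW} requires it \emph{everywhere}. The paper instead splits according to whether $a(r)|_{\partial\Omega}\equiv a(0)$. If not, equality in~\eqref{ineq: I} forces $r$ to be locally constant on $\{q\in\partial\Omega: a(r(q))\ne a(0)\}$, and a connectedness argument then shows $\partial\Omega$ is a slice. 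If $a(r)|_{\partial\Omega}\equiv a(0)$, one may take $a\equiv 1$; equality among star-shaped competitors makes $\partial\Omega$ a critical point of area under the volume constraint, hence CMC by the first variation, and the dichotomy is then supplied by Montiel's classification \cite[Corollary~7]{montiel1999unicity} of closed CMC hypersurfaces in such warped products --- an external ingredient your outline does not invoke and which is precisely what handles the borderline cases you flagged as obstacles.
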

\begin{proof}
The inequality follows from Theorem \ref{thm: GLW} (\cite[Theorem 1.2]{GLW}) and Theorem \ref{thm: main} (see also Remark \ref{rmk: glw} \eqref{item: mono}) .

Suppose the equality holds, we have two cases: (i) $a(r)|_{\partial \Omega}\not\equiv a(0)$ and (ii) $a(r)|_{\partial \Omega}\equiv a(0)$.\\
\noindent
Case (i).
Let $p\in \partial \Omega$ such that $a(r(p))\ne a(0)$ and $r_0=r(p)$. Then $S=\{q\in \partial \Omega: r(q)=r_0\}$ is clearly closed in $\partial \Omega$. It is also open in $\partial \Omega$ because by \eqref{ineq: I}, $r$ is locally constant on $\{q\in \partial \Omega: a(r(q))\ne a(0)\}$. Therefore $S=\partial \Omega$ and so $\partial \Omega$ is the slice $\{r=r_0\}$. \\
\noindent
Case (ii). In this case, we can without loss of generality assume $a\equiv 1$ on $\partial \Omega$. The equality asserts that $\partial \Omega$ is a smooth hypersurface which has minimum area among all graphical hypersurfaces bounding the same volume, and so by the first variation formulas (e.g. \cite[p. 1186]{osserman1978isoperimetric}), if $\Sigma_t$ is a variation of $\Sigma$ with normal variation $u \nu_{\Sigma_t}$, then
\begin{align*}
\left.\frac{d}{dt}\right|_{t=0}\mathrm{Area}(\Sigma_t)
= m \int_{\Sigma} u H_1 dS=0
\end{align*}
for all $u$ such that
$$\left.\frac{d}{dt}\right|_{t=0}\mathrm{Vol}(\Omega_t)
= \int_{\Sigma} u dS=0.$$
This implies that $\Sigma$ has constant mean curvature . It follows from \cite[Corollary 7]{montiel1999unicity} that either $(M,g)$ has constant curvature and $\partial \Omega$ is a geodesic hypersphere, or $\partial \Omega$ is a slice $\{r=r_0\}$.
\end{proof}

\begin{remark}\label{rmk: glw}
\begin{enumerate}
\item\label{item: mono}
The monotonicity of $s(r)$ is not assumed in Theorem \ref{thm: glw weighted} because of the same reason as Theorem \ref{thm star}. Also, from the proof of Theorem \ref{thm: main}, we only need the classical isoperimetric inequality to hold for star-shaped domains for Theorem \ref{thm: glw weighted} to hold.
\item
By direction computation,
\begin{align*}
&\frac{d^2}{du^2}\left(b( v^{-1}(u)) \;s (v^{-1}(u))^m\right)\\
=&\frac{1}{s(r)^{m+2}}\left(s(r)^2b''(r)+ms(r)s'(r)b'(r)+mb(r)\left(s(r)s''(r)-s'(r)^2\right)\right),
\end{align*}
where $r=v^{-1}(u)$. So the convexity of $b( v^{-1}(u)) \;s( v^{-1}(u))^m$ can be rephrased as
\begin{equation}\label{eq: second der}
s(r)^2b''(r)+ms(r)s'(r)b'(r)-mb(r)\left(s'(r)^2-s(r)s''(r)\right)\ge 0.
\end{equation}
This condition is often easier to check as $v^{-1}$ is usually not very explicit.
\end{enumerate}
\end{remark}
\section{Isoperimetric inequalities involving weighted volume}\label{sec: weighted vol}
In this section, we consider a variant of Theorem \ref{thm: main} involving a weighted volume. In particular, we prove an isoperimetric result without assuming the classical isoperimetric inequality to hold on $M$.

We consider the weighted volume defined by
$$ \mathrm{Vol}_c(\Omega):=\int_{\Omega}c(r)dv_g$$
where $dv_g$ is the $(m+1)$-dimensional volume form with respect to $g$ and $c(r)> 0$ is a radially symmetric weight function. Obviously, this is just the ordinary volume if $c\equiv 1$. We define $\widetilde \Omega^\#$ to be the region $B_R$ which has the same weighted volume as $\Omega$, i.e. $\mathrm{Vol}_c(\widetilde \Omega^\#)=\mathrm{Vol}_c(\Omega)$. Our goal is to look for conditions such that
\begin{align*}
\int_{\partial \Omega}a(r)dS
\ge\int_{\partial \widetilde \Omega^\#}a(r)dS.
\end{align*}

We define the functions $A(r)$, $\widetilde v(r)$ and $\widetilde V(r)$ by
\begin{align*}
A(r):= \prod_{q=1}^{p}s_q(r)^{m_q},\;
\widetilde v(r):= \int_{0}^{r}c(t) A(t) dt\quad \textrm{ and }\;
\widetilde V(r):= \int_{B_r}c \,dv_g =|N|\widetilde v(r).
\end{align*}

\begin{theorem}\label{thm weighted vol}
Let $\Omega$ be a bounded open set in $(M,g)$ with Lipschitz boundary.
Assume that
\begin{enumerate}
\item\label{cond1'}
The projection map $\pi: \partial \Omega\to N$ defined by $(r, \theta)\mapsto \theta$ is surjective.
\item\label{cond2'}
$a(r)$ is a positive continuous function such that
$\psi(r):=a(r)A(r)$ is non-decreasing,
\item \label{cond3'}
The function
$\psi\circ \widetilde V^{-1}$ is convex.
\end{enumerate}
Then
\begin{align*}
\int_{\partial \Omega} a(r)dS
\ge\int_{\partial \widetilde\Omega^{\#}} a(r)dS.
\end{align*}
The equality holds if and only if $r=\mathrm{constant}$, i.e. $\partial \Omega$ is a coordinate slice.
\end{theorem}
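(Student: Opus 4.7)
The plan is to mirror the proof of Theorem~\ref{thm: main}, with two substantive changes: the ordinary volume is everywhere replaced by the weighted volume $\mathrm{Vol}_c$, and since the classical isoperimetric inequality is no longer available to close the argument, I will work directly with $\psi=aA$ (rather than splitting off $a(0)$), with surjectivity of $\pi$ playing the role that $b(0)=0$ played before. By a standard approximation I reduce to the case where $\partial \Omega$ is piecewise $C^1$ and a finite union of graphs over Lipschitz subdomains $S_i\subset N$, with representations $r=r_{i,j}(\theta)$ for $\theta\in\overline{S_i}$ and $r_{i,1}<\cdots<r_{i,2k_i}$.

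The area computation on such a boundary, after discarding the factor $\prod_q(1+s_q^{-2}|\nabla_{N_q} r|^2)^{1/2}\ge 1$ and retaining only the $j=2k_i$ summand (legitimate because $\psi\ge 0$), yields
\begin{align*}
\int_{\partial\Omega} a(r)\,dS \;\ge\; \sum_i\int_{S_i}\psi(r_{i,2k_i})\,d\mathrm{vol}_N \;=\; \int_N \psi(\rho)\,d\mathrm{vol}_N,
\end{align*}
where $\rho(\theta):=r_{i,2k_i}(\theta)$ for $\theta\in S_i$. The final equality is the step where surjectivity is essential: it guarantees $\bigcup_i S_i$ has full measure in $N$, so $\rho$ is defined a.e.\ on $N$ without assigning a default value on $N\setminus S$. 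In Theorem~\ref{thm: main}, $b(0)=0$ absorbed any such default contribution; here it cannot, since $a(0)A(0)$ need not vanish.

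Next, Fubini gives the weighted analogue of~\eqref{eq: R}, namely $\mathrm{Vol}_c(\Omega) = \sum_{i,j}(-1)^j\int_{S_i}\widetilde v(r_{i,j})\,d\mathrm{vol}_N$, and the telescoping estimate (using that $\widetilde v$ is increasing) yields $\mathrm{Vol}_c(\Omega)\le \int_N \widetilde v(\rho)\,d\mathrm{vol}_N$. Define $R_1$ by $\widetilde V(R_1) = \int_N \widetilde v(\rho)\,d\mathrm{vol}_N$; then $R_1\ge R$, so $\psi(R_1)\ge \psi(R)$ by the monotonicity hypothesis. Applying Proposition~\ref{prop: jensen} with $\phi=\widetilde V$ and the probability measure $d\mu=|N|^{-1}d\mathrm{vol}_N$, the convexity of $\psi\circ\widetilde V^{-1}$ gives $\psi(R_1) = \psi\bigl(\mathcal{M}_{\widetilde V,\mu}[\rho]\bigr) \le \int_N\psi(\rho)\,d\mu$. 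Chaining the three estimates,
\begin{align*}
\int_{\partial\Omega} a(r)\,dS \;\ge\; \int_N\psi(\rho)\,d\mathrm{vol}_N \;\ge\; |N|\psi(R_1) \;\ge\; |N|\psi(R) \;=\; \int_{\partial\widetilde\Omega^\#} a(r)\,dS.
\end{align*}

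For the equality case, positivity of $a$ combined with $A(r)>0$ for $r>0$ ensures $\psi(r)>0$ for $r>0$; equality in the $j$-truncation then forces every $r_{i,j}$ with $j<2k_i$ to vanish, so by the strict ordering $k_i=1$ and $r_{i,1}=0$. Equality in the gradient-dropping step makes $r_{i,2}$ locally constant on each $S_i$, and (assuming $N$ is connected) equality in Jensen pins $\partial\Omega$ to a single coordinate slice $\{r=r_0\}$. The main technical obstacle I anticipate is the approximation step: a general Lipschitz $\Omega$ must be approximated by piecewise $C^1$ graphical domains while preserving both the weighted-volume constraint $\mathrm{Vol}_c(\Omega_i)=\mathrm{Vol}_c(\Omega)$ and the surjectivity of the projection, and the rigidity in the equality case must be recovered through this limiting procedure.
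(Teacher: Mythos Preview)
Your proposal is correct and follows essentially the same route as the paper's own proof: replace $v$ by $\widetilde v$, work with $\psi=aA$ in place of $bA$, use surjectivity of $\pi$ to ensure $\rho$ is defined a.e.\ on $N$, and then run the same chain (gradient drop, $j$-truncation, Jensen with $\phi=\widetilde V$, monotonicity of $\psi$) exactly as in Theorem~\ref{thm: main}. Your discussion of the equality case and the approximation caveat is, if anything, more careful than the paper, which simply defers both to the earlier proof.
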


\begin{proof}
We use the same notations as the proof of Theorem \ref{thm: main}. Since the proof is similar, we only indicate where changes are made.

We only prove the case where $\Sigma=\partial \Omega$ is piecewise $C^1$, and that $\Sigma$ is a union of graphs over finitely many domains on $N$. So we define $r_{i,j}$ as before. Note that $N=\bigcup_{i=1}^l \overline {S_i}$ by Condition \eqref{cond1'}. Let
\begin{equation*}
I:=\int_{\partial \Omega} a(r)dS\textrm{ and }I^{\#}:=\int_{\partial \widetilde\Omega^{\#}} a(r)dS.
\end{equation*}
As in \eqref{ineq: I},
\begin{equation*}
\begin{split}
I
=&\sum_{i=1}^{l}\sum_{j=1}^{2k_i}\int_{S_i}a(r_{i,j})\prod_{q=1}^{p}\left(1+s_q(r_{i,j})^{-2}\left|\nabla_{N_q}\, r_{i,j}\right|^2_{g_{N_q}}\right)^{\frac{1}{2}}s_q(r_{i,j})^{m_q}d\mathrm{vol}_N\\
\ge&\sum_{i=1}^{l}\int_{S_i} \psi(r_{i, 2k_i}) d\mathrm{vol}_N\\
=&\int_{N}\psi(\rho) d\mathrm{vol}_N
\end{split}
\end{equation*}
where $\rho: N\to [0, \infty)$ is defined by
$\rho(\theta):= \max\{r: (r, \theta)\in \Sigma\} $.

On the other hand, for $B_R=\widetilde\Omega^{\#}$, we have
\begin{align*}
\mathrm{Vol}_c (B_R) = |N| \int_{0}^{R} c(r)A(r)dr=|N|\widetilde v(R).
\end{align*}
As in \eqref{eq: R}, define $R_1$ by
\begin{align*}
\mathrm{Vol}_c (B_R)
= \mathrm{Vol}_c(\Omega ) =&\sum_{i=1}^{l}\sum_{j=1}^{2k_i}(-1)^j\int_{S_i} \widetilde v(r_{i,j}) d\mathrm{vol}_N\\
\le&\sum_{i=1}^{l}\int_{S_i} \widetilde v(r_{i,2k_i}) d\mathrm{vol}_N=: \mathrm{Vol}_c(B_{R_1}) .
\end{align*}
Then analogous to \eqref{eq: R1} and \eqref{ineq: I sharp}, we have
\begin{equation}\label{ineq: I sharp'}
R_1 = \widetilde V^{-1} \left(\int_{N} \widetilde v(\rho) d\mathrm{vol}_N\right)
\; \textrm{and}\;
I^{\#}=|N|\psi(R)\le |N| \psi(R_1).
\end{equation}

As $\psi\circ \widetilde V^{-1}$ is convex,
it is clear that we can proceed as in \eqref{ineq: pf1} to show that $I\ge I^\#$.

The analysis of the equality case and the general case is proved similarly as in Theorem \ref{thm: main}.
\end{proof}

Theorem \ref{thm1} immediately follows from Theorem \ref{thm weighted vol}. For Theorem \ref{thm2}, note that the only place where we have used the monotonicity of $\psi$ (or $s$ in the context of Theorem \ref{thm2}) is \eqref{ineq: I sharp'}. But since $\Sigma$ is star-shaped, $R_1=R$ and the monotonicity condition is not needed.

Let us state only the following version for later use.
\begin{theorem}\label{thm star}
Let $\Omega$ be a bounded open set in $(M,g)$ with Lipschitz star-shaped boundary. Suppose $a(r)$ is positive such that
the function
$\psi\circ \widetilde V^{-1}$ is convex, where $\psi(r)=a(r)A(r)$.
Then
\begin{align*}
\int_{\partial \Omega} a(r)dS
\ge\int_{\partial \widetilde\Omega^{\#}} a(r)dS.
\end{align*}
The equality holds if and only if $r=\mathrm{constant}$, i.e. $\partial \Omega$ is a coordinate slice.
\end{theorem}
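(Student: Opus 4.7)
The plan is to imitate the proof of Theorem~\ref{thm weighted vol}, observing that the star-shaped hypothesis makes the monotonicity of $\psi$ superfluous, as already hinted by the author. Since $\partial\Omega$ is star-shaped in the sense that it is a graph over $N$, we may write $\partial\Omega = \{(r,\theta): r = \rho(\theta),\, \theta \in N\}$ for some Lipschitz function $\rho: N \to [0,\lambda)$, so that $\Omega = \{(r,\theta) : r < \rho(\theta)\}$. In the notation of the proof of Theorem~\ref{thm: main}, this corresponds to $l = 1$, $S_1 = N$, $k_1 = 1$, $r_{1,1} \equiv 0$, and $r_{1,2} = \rho$, so Condition~\eqref{cond1'} of Theorem~\ref{thm weighted vol} is automatic.

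First I would use the area formula on a graph to write
\begin{equation*}
I := \int_{\partial\Omega} a(r)\, dS = \int_N a(\rho) \prod_{q=1}^{p}\left(1 + s_q(\rho)^{-2}|\nabla_{N_q}\rho|^2_{g_{N_q}}\right)^{1/2} s_q(\rho)^{m_q} \, d\mathrm{vol}_N,
\end{equation*}
and discard the gradient factors to obtain the lower bound $I \ge \int_N \psi(\rho)\, d\mathrm{vol}_N$, where $\psi(r) = a(r)A(r)$. Next, since $\Omega$ is the subgraph of $\rho$, Fubini gives
\begin{equation*}
\mathrm{Vol}_c(\Omega) = \int_N \widetilde v(\rho)\, d\mathrm{vol}_N,
\end{equation*}
with no alternating-sign cancellation. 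Writing $\widetilde\Omega^{\#} = B_R$, so that $|N|\widetilde v(R) = \mathrm{Vol}_c(\Omega)$, we obtain the exact identity
\begin{equation*}
R = \widetilde V^{-1}\!\left(\int_N \widetilde v(\rho)\, d\mathrm{vol}_N\right).
\end{equation*}
This is exactly the step at \eqref{ineq: I sharp'} in the general proof where the author had to pass through an auxiliary $R_1 \ge R$ and invoke monotonicity of $\psi$; here $R_1 = R$ directly and no monotonicity is needed.

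With $d\mu = d\mathrm{vol}_N / |N|$ a probability measure on $N$, I apply Proposition~\ref{prop: jensen} with $\phi = \widetilde V$ and the convex function $\psi \circ \widetilde V^{-1}$ to get
\begin{equation*}
\psi(R) = \psi\!\left(\widetilde V^{-1}\!\left(\int_N \widetilde V(\rho)\, d\mu\right)\right) \le \int_N \psi(\rho)\, d\mu.
\end{equation*}
Multiplying by $|N|$ and combining with the area estimate yields
\begin{equation*}
\int_{\partial\widetilde\Omega^{\#}} a(r)\, dS = |N|\,\psi(R) \le \int_N \psi(\rho)\, d\mathrm{vol}_N \le I,
\end{equation*}
which is the desired inequality. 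For equality, the graph-area step forces $\nabla_{N_q}\rho \equiv 0$ for every $q$, and the Jensen step (since $\widetilde V$ is strictly increasing and $\psi\circ\widetilde V^{-1}$ gives a strict inequality in Jensen unless the integrand is a.e.\ constant) forces $\widetilde V(\rho)$ to be constant, hence $\rho$ constant; either route shows $\partial\Omega$ is a slice $\{r = r_0\}$.

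The proof presents no real obstacle because Theorem~\ref{thm weighted vol} has done all the analytic work; the only thing to verify is that for the star-shaped configuration every inequality in that proof either collapses to an equality (the weighted-volume computation) or is immediate (the single-graph area formula), so condition~\eqref{cond2'} of Theorem~\ref{thm weighted vol} is never invoked. The only mild technicality is the passage from the smooth case to general Lipschitz $\rho$; this is handled by approximating $\rho$ by smooth functions $\rho_i$, noting that both sides of the inequality are continuous under uniform $W^{1,1}$-convergence of the graph, exactly as in the concluding paragraph of the proof of Theorem~\ref{thm: main}.
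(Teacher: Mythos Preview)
Your proposal is correct and follows exactly the approach the paper indicates: it specializes the proof of Theorem~\ref{thm weighted vol} to the single-graph situation, so that $R_1=R$ in \eqref{ineq: I sharp'} and the monotonicity hypothesis on $\psi$ becomes unnecessary. One small remark: your Jensen route to the equality case would require strict convexity of $\psi\circ\widetilde V^{-1}$, which is not assumed, so the gradient route (which matches the paper's own equality analysis) is the one that actually does the work.
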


\section{Some concrete examples}\label{sec: eg}
In this section, we provide some concrete examples of how Theorem \ref{thm3} can be used to obtain some interesting geometric inequalities.

In all the examples below, the metric $g$ on $M$ is all given by $g=dr^2+s(r)^2 g_{\mathbb S^{m}}$, and the convexity of $b(V^{-1}(u)) s(V^{-1}(u))^m$ is directly checked by using \eqref{eq: second der}. The computations have all been verified by Mathematica.
\begin{enumerate}
\item
On the Euclidean space $\mathbb R^{n}$, the warping function is $s(r)= r$. Choosing
$a(r)=b(r)= r^k $, we have
\begin{equation}\label{ineq: rk}
\int_{\partial \Omega} r^k \,dS\ge \int_{\partial \Omega^\#} r^k \,dS
\end{equation}
if $k\ge 1$. When $k=0$, this is just the classical isoperimetric inequality.
\item
On the hyperbolic space $\mathbb H^n$, the warping function is $s(r)=\sinh r$.
Choosing $a(r)=b(r)=\sinh^k (r)$, we have
\begin{equation*}
\int_{\partial \Omega} \sinh^k r \,dS\ge \int_{\partial \Omega^\#} \sinh^k r \,dS
\end{equation*}
if $k\ge 1$.
Similarly we also have
$$\int_{\partial \Omega} \cosh r \,dS\ge \int_{\partial \Omega^\#} \cosh r \,dS$$
and
$$\int_{\partial \Omega} (\cosh r-1)^k \,dS\ge \int_{\partial \Omega^\#} (\cosh r-1)^k \,dS$$
if $k\ge 1$.
\item
On the open hemisphere $\mathbb S^{n}_+$, the warping function is $s(r)=\sin r$, $(0<r<\frac{\pi}{2})$.
Choosing $a(r)=b(r)=\tan^k (r)$, we have
$$\int_{\partial \Omega} \tan^k r \,dS\ge \int_{\partial \Omega^\#} \tan^k r \,dS.$$
if $k\ge 1$ and $\Omega\subset \mathbb S^n_+$.
Similarly we also have
$$\int_{\partial \Omega} (1-\cos r) \,dS\ge \int_{\partial \Omega^\#} (1-\cos r) \,dS$$
if $k\ge 1$.
\end{enumerate}
In all the above examples, we can convert the inequalities into a form which involves the volume of $\Omega$:
\begin{align*}
\int_{\partial \Omega}a(r)dS\ge
|N|a(R)A(R)
\end{align*}
where $R=V^{-1}(|\Omega|)$. For example, in $\mathbb R^n$, the inequality
$$\int_{\partial \Omega}r^k dS\ge \int_{\partial \Omega^\#}r^k dS$$
is equivalent to
\begin{equation}\label{ineq: Rn}
\int_{\partial \Omega}r^k dS\ge n \beta_n ^{-\frac{k-1}{n}} \mathrm{Vol}(\Omega) ^{\frac{n-1+k}{n}}
\end{equation}
for $k\ge 0$, where $\beta_n$ is volume of the unit ball $B$ in $\mathbb R^n$.
For other spaces, the inequality is not as explicit because $V^{-1}$ is not explicit except when $n=2$.

\section{Weighted isoperimetric theorems involving higher order mean curvatures}\label{sec: higher}
In this section, we generalize the weighted isoperimetric inequality in a warped product manifold to some inequalities involving the weighted integrals of the higher order mean curvatures. This is closely related to the quermassintegral inequalities (\cite{Guan-Li}), which include as a special case the isoperimetric inequality, since the area integral can be interpreted as the integral of the zeroth mean curvature $H_0=1$.

From now on, our warped product manifold is $M^{n} = [0,\lambda)\times {N}^{n-1}$ equipped with the metric $g=dr^2+s(r)^2 g_N$.

Before stating the main theorem, we give some definitions which are useful in studying the extrinsic geometry of hypersurfaces.
On a hypersurface $\Sigma$ in $M$, we define the normalized $k$-th mean curvature function
\begin{eqnarray}
H_k:=H_k(\Lambda)=\frac{1}{\binom{n-1}{k}}\sigma_k(\Lambda),
\end{eqnarray}
where $\Lambda=(\l_1,\cdots,\l_{n-1})$ are the principal curvature functions on $\Sigma$ and the homogenous polynomial $\sigma_k$ of degree $k$ is the $k$-th elementary symmetric function
\[
\sigma_k(\Lambda)=\sum_{i_1<\cdots<i_{k}}\lambda_{i_1}\cdots\lambda_{i_k}.
\]
We adopt the usual convention $\sigma_0=H_{0}=1$.

The $k$-th Newton transformation $T_k: T\Sigma \rightarrow T \Sigma$ (cf. \cite{reilly1973variational}) is useful in studying the extrinsic geometry of $\Sigma$, and is defined as follows.
If we write
\begin{equation*}
T_k ( e_j ) =\sum_{i=1}^{n-1} ( T_k )_j^i e_{i},
\end{equation*}
then $ (T_k) _j^i $ are given by
$$
{(T_k)}_j^{\,i}= \frac 1 {k!}
\sum_{\substack{1 \le i_1,\cdots, i_k \le n-1\\ 1\le j_1, \cdots, j_k \le n-1}}
\delta^{i i_1 \ldots i_k}_{j j_1 \ldots j_k}
B_{i_1}^{j_1}\cdots B_{i_k}^{j_k}
$$
where $B$ is the second fundamental form of $\Sigma$.
One also defines $T_0 = \mathrm{Id}$, the identity map.

We define the vector field $X = s(r) \, \frac{\partial}{\partial r}$ and the potential function $c(r) = s'(r)$.
Note that $X$ is a conformal Killing vector field: $\mathcal L_X g = 2 c g$ \cite[Lemma 2.2]{B2013}.

The warped product manifold is somewhat special in that there exists a nontrivial conformal Killing vector field, which in turn leads to some nice formulas of Hsiung-Minkowski types. We will need the following weighted Hsiung-Minkowski formulas (cf. \cite[Proposition 2.1]{Kwo2016}, \cite[Proposition 1]{KLP}):
\begin{proposition}[Weighted Hsiung-Minkowski formulas]\label{prop: HM}
Suppose $\eta$ is a smooth function on a closed hypersurface $\Sigma$ in $M$, then for $1\le k \le n-1$, we have
\begin{equation*} \label{weighted in}
\begin{split}
\int_\Sigma \eta c H_{k-1}dS =&\int_\Sigma \eta H_{k} \langle X, \nu\rangle dS
-\frac{1}{k{{n-1}\choose k}}\int_\Sigma \eta \left(\mathrm{div}_\Sigma T_{k-1}\right)(X^T)dS\\
&-\frac{1}{k{{n-1}\choose k}}\int_\Sigma \langle T_{k-1}(X^T), \nabla _\Sigma \eta\rangle dS,
\end{split}
\end{equation*}
where $\nu$ is the unit normal vector, $X=s (r) \partial _r$ and $X^T$ is the tangential component of $X$ onto $T\Sigma$.
\end{proposition}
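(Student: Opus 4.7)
The plan is to apply Stokes's theorem to the tangent vector field $\eta\, T_{k-1}(X^T)$ on the closed hypersurface $\Sigma$, and then to identify the resulting trace term via the conformal Killing property of $X$ together with two classical Newton-tensor identities. Expanding the divergence by the Leibniz rule gives
\begin{equation*}
\mathrm{div}_\Sigma\bigl(\eta\, T_{k-1}(X^T)\bigr) = \eta\,(\mathrm{div}_\Sigma T_{k-1})(X^T) + \eta\,\mathrm{tr}_\Sigma\bigl(T_{k-1}\circ \nabla^\Sigma X^T\bigr) + \langle T_{k-1}(X^T),\nabla_\Sigma \eta\rangle,
\end{equation*}
and integrating over the closed $\Sigma$ kills the left-hand side. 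The entire proposition then reduces to proving the pointwise identity
\begin{equation*}
\mathrm{tr}_\Sigma\bigl(T_{k-1}\circ \nabla^\Sigma X^T\bigr) = k\binom{n-1}{k}\bigl(cH_{k-1} - \langle X,\nu\rangle H_k\bigr).
\end{equation*}

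For this, I would use the Gauss-Weingarten splitting: for $Y\in T\Sigma$,
\begin{equation*}
\nabla^\Sigma_Y X^T = (\nabla_Y X)^T - \langle X,\nu\rangle\, S(Y),
\end{equation*}
where $S$ is the Weingarten endomorphism of $\Sigma$. Composing with $T_{k-1}$ and taking the trace splits the expression into two pieces. For the first, the conformal Killing identity $\mathcal L_X g = 2cg$ decomposes $\nabla X$ into $cI$ plus a pointwise skew-symmetric part; since $T_{k-1}$ is symmetric and the skew part restricts to a skew endomorphism of $T\Sigma$, it traces to zero, leaving $c\,\mathrm{tr}(T_{k-1}) = c\cdot k\binom{n-1}{k}H_{k-1}$ by the standard formula for $\mathrm{tr}(T_{k-1})$. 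For the second, the classical Newton identity gives $\mathrm{tr}(T_{k-1}\circ S) = k\,\sigma_k = k\binom{n-1}{k}H_k$. Assembling these contributions and rearranging produces the stated formula.

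The main obstacle is sign bookkeeping: the sign of the $\langle X,\nu\rangle S(Y)$ term in the Gauss-Weingarten splitting depends on whether one takes $S(Y) = \nabla_Y \nu$ or $S(Y) = -\nabla_Y \nu$, and this has to match the convention under which the principal curvatures of the slice $\{r=r_0\}$ are $s'(r_0)/s(r_0)$. A quick sanity check on such a slice, where $X^T = 0$, $\langle X,\nu\rangle = s(r_0)$, $c = s'(r_0)$, and $H_k = (s'(r_0)/s(r_0))^k$, confirms that both sides of the proposed identity equal $(s')^k/s^{k-1}$ times the slice area, locking down the sign. Beyond this, the argument is a weighted version of Reilly's integration-by-parts framework for Newton transformations: the function $\eta$ plays a purely passive role, contributing only the final term through the Leibniz expansion, and no further curvature hypothesis on $N$ or $s$ is required.
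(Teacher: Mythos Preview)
Your proposal is correct and follows essentially the same route as the paper: both arguments integrate $\mathrm{div}_\Sigma\bigl(\eta\,T_{k-1}(X^T)\bigr)$ over the closed $\Sigma$, expand by the Leibniz rule, and evaluate the trace term using the conformal Killing property of $X$ together with the Newton-tensor identities $\mathrm{tr}_\Sigma(T_{k-1})=k\binom{n-1}{k}H_{k-1}$ and $\langle T_{k-1},B\rangle=k\binom{n-1}{k}H_k$. The paper phrases the trace piece as $\tfrac12\langle T_{k-1},\iota^*(\mathcal L_X g)\rangle-\langle T_{k-1},B\rangle\langle X,\nu\rangle$ rather than via your Gauss--Weingarten splitting, but these are the same computation in different notation; note also that in this warped product $X$ is actually a gradient conformal field, so $\nabla_Y X=cY$ exactly and the skew part you allow for is in fact zero.
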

\begin{proof}
For completeness we sketch the proof here. Let $m=n-1$. We compute
\begin{align*}
&\mathrm{div}_\Sigma \left(\eta T_k (X^T)\right)\\
=&\langle T_k(\nabla \eta), X^T\rangle+ \eta(\mathrm{div}\;T_k)(X^T)+ \frac{1}{2}\eta\langle T_k, \iota^* (\mathcal{L}_Xg )\rangle -\eta\langle T_k, B\rangle \langle X, \nu\rangle\\
=&\langle T_k(\nabla \eta), X^T\rangle+ \eta(\mathrm{div}\;T_k)(X^T)+ c \eta\langle T_k, \iota^*g  \rangle -\eta\langle T_k, B\rangle \langle X, \nu\rangle\\
=&\langle T_k(\nabla \eta), X^T\rangle+ \eta(\mathrm{div}\;T_k)(X^T)+ (m-k){m\choose k}c \eta H_k -(k+1){m\choose {k+1}}H_{k+1}\eta \langle X, \nu\rangle
\end{align*}
where $\iota$ is the inclusion of $\Sigma$ in $M$ and we used the fact that $\mathrm{tr}_\Sigma(T_k)= (m-k){m\choose k}H_k$ and $\langle T_k, B\rangle =(k+1){m\choose {k+1}} H_{k+1}$. Applying the divergence theorem will then give the result.
\end{proof}

\begin{lemma} \label{T positive}
Suppose $N$ has constant curvature $K$ and $\Sigma$ is a star-shaped hypersurface with $H_p>0$.
Assume that $s'>0$ for $r>0$ and $s'(r)^2-s(r)s''(r)\le K$.
Then
\begin{enumerate}
\item\label{item: 1}
For all $k \in \{1, \cdots, p-1\}$, we have
$T_k>0$ and $H_k>0$.
\item\label{item: 2}
For $k \in \{2, \cdots, p\}$,
\begin{equation}\label{eq: div}
(\mathrm{div}_\Sigma T_{k-1}) (X^T)\ge 0,
\end{equation}
where $X^T$ is the tangential component of $X$ onto $T\Sigma$.
\end{enumerate}
\end{lemma}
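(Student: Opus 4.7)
The two claims are of different character: (1) is a pointwise algebraic positivity, while (2) is a curvature identity specific to the warped-product structure. I would treat them in that order.

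\emph{Part (1).} The strategy is to show that the principal-curvature vector $\Lambda(x)\in\mathbb R^{n-1}$ lies in the G\aa rding cone
\[
\Gamma_p^+=\{\mu\in\mathbb R^{n-1}:\sigma_1(\mu)>0,\ldots,\sigma_p(\mu)>0\}
\]
for every $x\in\Sigma$. Once $\Lambda\in\Gamma_p^+$, the classical G\aa rding--Maclaurin theory immediately delivers $H_k>0$ and $T_k>0$ (positive definite) for $0\le k\le p-1$. To place $\Lambda$ inside $\Gamma_p^+$, I would anchor the argument at a ``high'' point. Since $\Sigma$ is a graph over the compact base $N$, the radial coordinate $r$ attains its maximum at some $x_0\in\Sigma$, where the outward normal is $\partial_r$ and $\Sigma$ is internally tangent to the slice $\{r=r(x_0)\}$. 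That slice has shape operator $\tfrac{s'(r(x_0))}{s(r(x_0))}\mathrm{Id}>0$ by the hypothesis $s'>0$, and a standard height-function (maximum-principle) comparison shows every principal curvature of $\Sigma$ at $x_0$ is bounded below by this positive value. Hence $\Lambda(x_0)$ lies in the positive orthant $\Gamma^+\subset\Gamma_p^+$. Because $\Sigma$ is connected, $H_p>0$ throughout, and $\Gamma_p^+$ is precisely the connected component of $\{\sigma_p>0\}$ containing the positive orthant, a continuity argument forces $\Lambda(x)\in\Gamma_p^+$ for every $x\in\Sigma$.

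\emph{Part (2).} The key is the classical ``Reilly'' formula expressing $\mathrm{div}_\Sigma T_{k-1}$ as a contraction of the \emph{ambient} Riemann tensor $R_M$ against the previous Newton transformation---an expression that vanishes in constant sectional curvature. Substituting the warped-product curvature tensor of $M=[0,\lambda)\times_s N$ (whose radial sectional curvature is $-s''/s$ and whose fibre sectional curvature is $(K-s'^2)/s^2$ thanks to $N$ having constant curvature $K$) and pairing with $X^T$---noting that $X=s\partial_r$ is exactly the radial direction picked out by the warping---the radial and fibre contributions combine to leave an identity of the form
\[
(\mathrm{div}_\Sigma T_{k-1})(X^T)=C_{k,n}\,\frac{K-(s'^2-ss'')}{s^2}\,\bigl\langle T_{k-2}(X^T),X^T\bigr\rangle,
\]
with $C_{k,n}>0$ a combinatorial constant (and $T_0=\mathrm{Id}$ when $k=2$). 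Both factors on the right are non-negative: the scalar by the hypothesis $s'^2-ss''\le K$, and the quadratic form by part (1) applied to $T_{k-2}$, so the inequality \eqref{eq: div} follows.

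\textbf{Main obstacle.} The principal technical step is the algebraic reduction giving the displayed formula for $(\mathrm{div}_\Sigma T_{k-1})(X^T)$. One must carefully contract the two distinct pieces of the warped-product curvature tensor---the radial part proportional to $-s''/s$ and the fibre part proportional to $(K-s'^2)/s^2$---against $T_{k-2}$ and $X^T$, and verify that the seemingly separate contributions merge into the single coefficient $(K-s'^2+ss'')/s^2$. This is a routine but bookkeeping-heavy computation using Codazzi together with O'Neill's warped-product curvature formulas. Part (1) is comparatively soft: the only subtlety is invoking G\aa rding's theorem, which is standard (cf.\ Caffarelli--Nirenberg--Spruck, and the treatment in Guan--Li and Guan--Li--Li).
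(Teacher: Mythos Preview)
Your proposal is correct and follows exactly the route the paper has in mind. The paper does not give an independent argument at all: it simply cites \cite{KLP} Lemma~1 and Proposition~1 and \cite{BE2013} Section~2, noting only that the strict inequalities there (Condition~(H4): $K>s'^2-ss''$ and $\langle X,\nu\rangle>0$) can be relaxed to non-strict ones because the conclusion \eqref{eq: div} is itself non-strict. Your sketch of part~(1) via the G\aa rding cone and the maximum point of $r$ is precisely the argument in those references, and your sketch of part~(2)---compute $\mathrm{div}_\Sigma T_{k-1}$ from Codazzi plus the ambient curvature, insert O'Neill's warped-product curvature identities (this is where constant curvature of $N$, i.e.\ conformal flatness of $g$, is used), and arrive at a constant multiple of $\dfrac{K-(s'^2-ss'')}{s^2}\,\langle T_{k-2}(X^T),X^T\rangle$---is exactly the computation on p.~393 of \cite{BE2013} that the paper points to. In short, you have reconstructed the content of the cited proofs rather than offered a different approach.
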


\begin{proof}

This is essentially \cite[Lemma 1, Proposition 1]{KLP} or \cite[Section 2]{BE2013}, despite some minor differences in the assumptions. \eqref{item: 1} is proved in \cite[Lemma 1 (2b)]{KLP}. \eqref{item: 2} follows the same proof as in \cite[Proposition 1 (2)]{KLP}. In the proof of \cite[Proposition 1 (2)]{KLP}, it is assumed that $K>s'(r)^2-s(r)s''(r)$ (Condition (H4) in \cite{BE2013}, \cite{B2013}) and $\langle X, \nu\rangle >0$, but since we only require non-strict inequality in \eqref{eq: div}, the conclusion still holds under our assumption.

A remark is that we need $N$ to have constant curvature because conformal flatness of $g$ is essential in the formula of $\left(\mathrm{div}_{\Sigma}T_{k-1}\right)(X^T)$ on p. 393 in \cite{BE2013}.

\end{proof}

\begin{theorem}\label{thm: mean curvature}
Suppose $\Omega$ is a domain in $M$ and its boundary
is a smooth hypersurface with $H_1 \ge 0$.
Assume that $s'>0$ for $r>0$ and
$0\le s'(r)^2-s(r)s''(r)$.
Then for $l\ge 1$,
\begin{align*}
|N|^{-\frac{l-1}{n}} \left(n\int_{\Omega}c(r) dv\right)^{\frac{n+l-1}{n}}
\le \int_{\partial \Omega} H_1 s(r)^{l+1} c(r)^{-1} \,dS.
\end{align*}
The equality holds if and only if $\partial \Omega$ is a slice.
\end{theorem}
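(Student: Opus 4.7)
The plan is to couple the weighted Hsiung-Minkowski formula of Proposition \ref{prop: HM} (with $k=1$) with the weighted isoperimetric inequality of Section \ref{sec: weighted vol}. The key test function will be $\eta(r)=s(r)^l/c(r)$, chosen so that $\eta c=s^l$ on the left-hand side of Hsiung-Minkowski while $\eta H_1\langle X,\nu\rangle=H_1 s^{l+1}u/c$ on the right-hand side, where $u:=\langle\partial_r,\nu\rangle$.

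Step one: since $T_0=\mathrm{Id}$ and $\mathrm{div}_\Sigma T_0=0$, Proposition \ref{prop: HM} reduces to
\[
\int_\Sigma \eta c\,dS = \int_\Sigma \eta H_1\langle X,\nu\rangle\,dS - \frac{1}{n-1}\int_\Sigma \eta'(r)\,s\,(1-u^2)\,dS,
\]
using $\langle X^T,\nabla_\Sigma\eta\rangle=\eta'(r)\langle X,\nabla_\Sigma r\rangle=\eta'(r)\,s\,(1-u^2)$. A direct computation gives
\[
\eta'(r)=\frac{s^{l-1}\bigl((l-1)c^2+(c^2-s s'')\bigr)}{c^2}\ge 0,
\]
the non-negativity coming from $l\ge 1$ and the hypothesis $c^2-ss''=(s')^2-ss''\ge 0$. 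Dropping the resulting non-negative correction integral, then using $u\le 1$ with $H_1\ge 0$ pointwise, I obtain
\[
\int_\Sigma s^l\,dS \;\le\; \int_\Sigma H_1\frac{s^{l+1}}{c}u\,dS \;\le\; \int_\Sigma H_1\frac{s^{l+1}}{c}\,dS.
\]

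Step two: to lower bound $\int_\Sigma s^l\,dS$ I would apply the weighted isoperimetric inequality (Theorem \ref{thm weighted vol}) with boundary weight $a(r)=s(r)^l$ and volume weight $c(r)=s'(r)$. Since $c(r)A(r)=\tfrac{d}{dr}\bigl(s(r)^n/n\bigr)$, one has $\widetilde V(r)=|N|\,s(r)^n/n$, so $\psi(r):=a(r)A(r)=s(r)^{n+l-1}$ satisfies $\psi(\widetilde V^{-1}(u))=(nu/|N|)^{(n+l-1)/n}$, which is convex since $l\ge 1$; monotonicity $\psi'\ge 0$ is immediate from $s'>0$. Theorem \ref{thm weighted vol} then yields
\[
\int_\Sigma s^l\,dS \;\ge\; |N|\,s(R)^{n+l-1} \;=\; |N|^{-(l-1)/n}\Bigl(n\int_\Omega c\,dv\Bigr)^{(n+l-1)/n},
\]
where $R$ is defined by $\widetilde V(R)=\int_\Omega c\,dv$. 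Chaining the two estimates produces the theorem, and tracing the equalities back forces both $u\equiv 1$ on $\Sigma$ and the isoperimetric equality of Theorem \ref{thm weighted vol}, so $\partial\Omega$ must be a coordinate slice.

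The main obstacle is discharging the projection-surjectivity hypothesis of Theorem \ref{thm weighted vol} for a mean-convex (not necessarily star-shaped) $\Omega$. In the typical situation $s(0)=0$ one has $\psi(0)=0$, so the fibres of $\pi$ missed by $\partial\Omega$ contribute nothing to the Jensen step in the proof of Theorem \ref{thm weighted vol} and the argument carries through verbatim; in full generality one would either need to supplement the hypotheses (e.g.\ impose star-shapedness and invoke Theorem \ref{thm star} instead) or perform an approximation argument.
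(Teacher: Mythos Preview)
Your argument is essentially identical to the paper's: both apply the weighted Hsiung--Minkowski formula with $k=1$ and $\eta=s^l/c$, verify $\eta'\ge 0$ via $l\,c^2-ss''\ge 0$, bound $\langle X,\nu\rangle\le s$ using $H_1\ge 0$, and then appeal to the weighted isoperimetric inequality with $a(r)=s(r)^l$ and $\widetilde v(r)=s(r)^n/n$. The only cosmetic difference is that the paper invokes Theorem~\ref{thm star} (the star-shaped version) rather than Theorem~\ref{thm weighted vol}; consequently the hypothesis gap you flag in your final paragraph is present in the paper's own proof as well---the statement assumes neither star-shapedness nor surjectivity of~$\pi$, yet the isoperimetric step uses one of these (or, as you observe, the fact that $\psi(0)=0$ when $s(0)=0$).
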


\begin{proof}
We will prove the following chain of inequalities:
\begin{equation*}
\begin{split}
|N|^{-\frac{l-1}{n}} \left(n\int_{\Omega}c(r) dv\right)^{\frac{n+l-1}{n}}
\le \int_{\partial \Omega} s(r)^{l} dS
\le \int_{\partial \Omega} H_1 s(r)^{l+1} {c(r)}^{-1} \,dS.
\end{split}
\end{equation*}
Define $a(r)=s(r)^l $.
As $A(r)=s(r)^{n-1}$, $\psi(r)=s(r)^{n-1+l} $ and $\widetilde v(r)=\int_{0}^{r}s(t)^{n-1} c (t) dt=\frac{1}{n} s(r)^n$, we have
\begin{align*}
\psi\circ \widetilde v^{-1}(u)
=& \left( n u\right)^{\frac{n-1+l}{n}}
\end{align*}
which is clearly convex as $l\ge 1$. So by Theorem \ref{thm star},
we have
\begin{equation}\label{eq: star1}
\int_{\partial \Omega}s(r)^l dS\ge \int_{\partial \widetilde \Omega^\#}s(r) ^l dS
=|N|^{-\frac{l-1}{n}} \left(n\int_{\Omega}c(r) dv\right)^{\frac{n+l-1}{n}}.
\end{equation}

We now simply denote $s(r)$ by $s$ and $c(r)$ by $c$.
Applying the weighted Hsiung-Minkowski formula (Proposition \ref{prop: HM}), we have
\begin{align*}
\int_{\partial \Omega} s^l dS
=\int_{\partial \Omega} c\frac{s^l}{c} dS
=&\int_{\partial \Omega} \frac{s^l}{c} H_1 \langle X, \nu \rangle-\frac{1}{n-1} \int_{\partial \Omega} \left\langle \nabla \left(\frac{s^l}{c}\right), s \nabla r\right\rangle dS\\
=&\int_{\partial \Omega} \frac{s^l}{c} H_1 \langle X, \nu \rangle-\frac{1}{n-1} \int_{\partial \Omega} \left\langle \frac{s^{l-1} }{c^2}(lc^2-ss'')\nabla r, s \nabla r\right\rangle dS\\
\le&\int_{\partial \Omega} \frac{s^l}{c} H_1 \langle X, \nu \rangle dS\\
\le&\int_{\partial \Omega} \frac{s^{l+1} }{c} H_1 dS.
\end{align*}
\end{proof}

To relate the weighted volume to the integral of the higher order mean curvatures, we need stronger assumptions.
\begin{theorem}\label{thm: higher}
Suppose $N$ has constant curvature $K$ and $\Sigma$ is a closed star-shaped hypersurface which is the boundary of a domain $\Omega$.
Assume that $s'>0$ for $r>0$ and
$0\le s'(r)^2-s(r)s''(r)\le K$ and $H_k>0$ on $\Sigma$.
Then for
$l\ge 1$,
\begin{align*}
|N|^{-\frac{l-1}{n}} \left(n\int_{\Omega}c(r) \, dv\right)^{\frac{n+l-1}{n}}
\le \int_{\partial \Omega} H_k s(r)^{l+k} c(r)^{-k} \,dS.
\end{align*}
The equality holds if and only if $\Sigma$ is a slice.
\end{theorem}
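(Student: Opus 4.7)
The plan is to establish the chain of inequalities
\[
|N|^{-\frac{l-1}{n}}\!\left(n\!\int_\Omega c\,dv\right)^{\!\frac{n+l-1}{n}}\!\!\le \int_\Sigma H_1 s^{l+1}c^{-1}dS\le \int_\Sigma H_2 s^{l+2}c^{-2}dS\le \cdots \le \int_\Sigma H_k s^{l+k}c^{-k}dS,
\]
exactly mirroring the analogous chain advertised in the introduction for $\mathbb R^n$. The first inequality is Theorem \ref{thm: mean curvature}, so the only new content is to climb one rung of the ladder: for $2\le j\le k$, show $\int_\Sigma H_{j-1}s^{l+j-1}c^{-(j-1)}dS\le \int_\Sigma H_j s^{l+j}c^{-j}dS$. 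Since $H_k>0$ and $\Sigma$ is star-shaped, Lemma \ref{T positive} gives $H_{j-1},H_j>0$ and $T_{j-1}\succeq 0$ for every $j$ in this range.

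The main tool is the weighted Hsiung–Minkowski formula (Proposition \ref{prop: HM}) applied with the test function $\eta_j:=s^{l+j-1}c^{-j}$, chosen so that $\eta_j c = s^{l+j-1}c^{-(j-1)}$. This converts the identity into
\[
\int_\Sigma H_{j-1}s^{l+j-1}c^{-(j-1)}dS=\int_\Sigma H_j\eta_j\langle X,\nu\rangle dS-\tfrac{1}{j\binom{n-1}{j}}\!\int_\Sigma\eta_j(\mathrm{div}_\Sigma T_{j-1})(X^T)dS-\tfrac{1}{j\binom{n-1}{j}}\!\int_\Sigma\langle T_{j-1}(X^T),\nabla_\Sigma\eta_j\rangle dS.
\]
I would then verify that the last two integrals are non-negative (so that dropping them only increases the right-hand side) and that $\langle X,\nu\rangle\le s$ controls the first. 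The second integral is non-negative by Lemma \ref{T positive}\eqref{item: 2}. For the third, decomposing $X^T=s\nabla_\Sigma r$ and $\nabla_\Sigma\eta_j=\eta_j'(r)\nabla_\Sigma r$ gives integrand $s\eta_j'(r)\langle T_{j-1}(\nabla_\Sigma r),\nabla_\Sigma r\rangle$; a direct computation yields
\[
\eta_j'(r)=\frac{s^{l+j-2}}{c^{j+1}}\bigl[(l+j-1)c^2-jss''\bigr],
\]
and the hypothesis $s'^2-ss''\ge 0$ (i.e.\ $c^2\ge ss''$) together with $l\ge 1$ gives $(l+j-1)c^2-jss''\ge (l-1)c^2\ge 0$, whence $\eta_j'(r)\ge 0$ and the third integrand is non-negative thanks to $T_{j-1}\succeq 0$. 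Finally, star-shapedness and $|\partial_r|=1$ give $\langle X,\nu\rangle = s\langle\partial_r,\nu\rangle\le s$, so $\int_\Sigma H_j\eta_j\langle X,\nu\rangle dS\le \int_\Sigma H_j s^{l+j}c^{-j}dS$ using $H_j>0$. Combining the three estimates yields the $(j-1)\to j$ step.

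For the equality case, all three inequalities above must be equalities simultaneously. The saturation of $\langle X,\nu\rangle\le s$ (given $H_j>0$ on a dense set) forces $\langle\partial_r,\nu\rangle\equiv 1$, so $\nabla_\Sigma r\equiv 0$ and $r$ is locally constant on the connected hypersurface $\Sigma$; hence $\Sigma$ is a coordinate slice. (Equality at the base of the chain, by Theorem \ref{thm: mean curvature}, already forces this conclusion.)

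I expect the main obstacle to be bookkeeping the sign conditions at each rung: in particular, making sure that the upper bound $s'^2-ss''\le K$ is used only to invoke Lemma \ref{T positive} (which in turn controls the divergence term and gives $T_{j-1}\succeq 0$), while the lower bound $s'^2\ge ss''$ alone is what drives the monotonicity $\eta_j'\ge 0$ needed for the gradient term. A secondary subtlety is the cascade $H_k>0\Rightarrow H_j>0$ for $j<k$, which must be invoked on all of $\Sigma$ before the pointwise bound $\langle X,\nu\rangle\le s$ can be used to pass from $H_j\eta_j\langle X,\nu\rangle$ to $H_j s^{l+j}c^{-j}$.
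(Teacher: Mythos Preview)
Your proof is correct and essentially identical to the paper's: both establish the same chain of inequalities by repeatedly applying the weighted Hsiung--Minkowski formula with the test function $\eta = s^{l+j}c^{-(j+1)}$ (your $\eta_j$ is the same up to the index shift $j\mapsto j-1$), dropping the divergence term via Lemma~\ref{T positive}\eqref{item: 2}, controlling the gradient term via $c^2\ge ss''$ and $T_{j-1}\succeq 0$, and then using $\langle X,\nu\rangle\le s$ with $H_j>0$. The equality analysis is also the same.
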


\begin{proof}
We actually prove the following stronger statement:
\begin{equation}\label{ineq: chain}
\begin{split}
|N|^{-\frac{l-1}{n}} \left(n\int_{\Omega}c(r) dv\right)^{\frac{n+l-1}{n}}
\le \int_{\partial \Omega} s(r)^{l} dS
\le& \int_{\partial \Omega} H_1 s(r)^{l+1} c(r)^{-1} dS\\
\le& \cdots\\
\le& \int_{\partial \Omega} H_k s(r)^{l+k} c(r)^{-k} dS.
\end{split}
\end{equation}

The first two inequalities have already been proved in Theorem \ref{thm: mean curvature}.

We prove the remaining inequalities by induction. Again denote $s(r)$
by $s$ and $c(r)$ by $c$. By the weighted Hsiung-Minkowski formula (Proposition \ref{prop: HM}) and Lemma \ref{T positive}, for $1\le j\le k-1$, we have
\begin{align*}
&\int_{\partial \Omega} \frac{s^{l+j}}{c ^j }H_j dS\\
=&\int_{\partial \Omega} \frac{s^{l+j} }{c^{j+1} } H_{j+1} \langle X, \nu \rangle dS
-\frac{1}{j{{n-1}\choose j}} \int_{\partial \Omega} \left\langle \frac{s^{l+j-1} }{c ^{j+2} }((l+j)c^2 -(j+1)ss'' )T_j(\nabla r), s  \nabla r\right\rangle dS\\
\le&\int_{\partial \Omega} \frac{s^{l+j} }{c^{j+1} } H_{j+1} \langle X, \nu \rangle dS\\
\le&\int_{\partial \Omega} \frac{s^{l+j+1} }{c^{j+1} } H_{j+1}dS.
\end{align*}

\end{proof}
We note that the conditions in Theorem \ref{thm: mean curvature} and Theorem \ref{thm: higher} are satisfied in the following space forms:
\begin{enumerate}
\item
The Euclidean space $\mathbb R^n$ with metric $dr^2+r^2 g_{\mathbb S^{n-1}}$.
\item
The hyperbolic space $\mathbb H^n$ with metric $dr^2+\sinh^2 r g_{\mathbb S^{n-1}}$.
\item
The open hemisphere $\mathbb S^n_+$ with metric $dr^2+\sin^2 r g_{\mathbb S^{n-1}}$.
\end{enumerate}
In the following corollaries, we denote the point $\{r=0\}$ by $0$ and the volume of the unit ball in $\mathbb R^n$ by $\beta_n$. We obtain the following corollaries.
\begin{corollary}\label{cor: Rn}
Let $\Sigma$ be a closed embedded hypersurface in $\mathbb R^{m+1}$
which is star-shaped with respect to $0$ and $\Omega$ is the region enclosed by it. Assume that $H_k>0$ on $\Sigma$.
Then for any integer $l\ge 0$,
\begin{align*}
n \beta_n^{-\frac{l-1}{n}}\mathrm{Vol}(\Omega)^{\frac{n-1+l}{n}}
\le\int_{\Sigma} H_kr^{l+k}dS.
\end{align*}
\end{corollary}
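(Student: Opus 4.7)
The plan is to specialize Theorem \ref{thm: higher} to $\mathbb{R}^{m+1}$ viewed as the warped product $[0,\infty)\times \mathbb{S}^{n-1}$ (with $n=m+1$) carrying the metric $g = dr^2 + r^2 g_{\mathbb{S}^{n-1}}$. In this setting $s(r)=r$, $c(r)=s'(r)=1$, $A(r)=r^{n-1}$, $\int_{\Omega} c\,dv=\mathrm{Vol}(\Omega)$, and $|N|=|\mathbb{S}^{n-1}|=n\beta_n$. The base $\mathbb{S}^{n-1}$ has constant curvature $K=1$, and one checks that $s'^2-ss''=1$ lies in $[0,K]$. Combined with the hypotheses that $\Sigma$ is star-shaped with respect to $0$ and $H_k>0$, all the assumptions of Theorem \ref{thm: higher} are verified.

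For $l\ge 1$ the corollary is then an immediate consequence of Theorem \ref{thm: higher}: substituting the dictionary above into the conclusion yields
$$ |N|^{-(l-1)/n}\bigl(n\,\mathrm{Vol}(\Omega)\bigr)^{(n+l-1)/n}\;\le\;\int_{\Sigma} H_k\, r^{l+k}\, dS. $$
The constants on the left-hand side collapse via the identity $|N|^{-(l-1)/n}\,n^{(n+l-1)/n}=n\,\beta_n^{-(l-1)/n}$, which is a straightforward exponent count using $|N|=n\beta_n$. The equality characterization inherited from Theorem \ref{thm: higher} — namely that $\Sigma$ is a coordinate slice $\{r=r_0\}$ — specializes to $\Sigma$ being a round sphere centered at the origin, as asserted.

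The case $l=0$ requires a small detour, because Theorem \ref{thm: higher} does not literally apply: for $l=0$ one has $\psi(r)=r^{n-1}$ and $\widetilde{v}(r)=r^n/n$, so $\psi\circ\widetilde{v}^{-1}(u)=(nu)^{(n-1)/n}$ is concave rather than convex, and the first step of the chain \eqref{ineq: chain} fails. The remedy is to replace that first step by the classical Euclidean isoperimetric inequality $n\beta_n^{1/n}\,\mathrm{Vol}(\Omega)^{(n-1)/n}\le \mathrm{Area}(\Sigma)$. All subsequent Hsiung-Minkowski comparisons $\int_\Sigma H_j\, r^j\,dS\le \int_\Sigma H_{j+1}\, r^{j+1}\,dS$ used in the proof of Theorem \ref{thm: higher} remain valid when $l=0$ in $\mathbb{R}^n$, since the error term $(l+j)c^2-(j+1)ss''=j$ is still non-negative because $s''\equiv 0$. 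Iterating the chain from $j=0$ up to $k$ produces the inequality in the $l=0$ case.

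The main (really the only) subtlety is thus carving out the $l=0$ case and grafting on the classical isoperimetric inequality; once that is done, everything reduces to Theorem \ref{thm: higher} plus a bookkeeping computation on the exponents.
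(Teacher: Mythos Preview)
Your proof is correct and follows essentially the same approach as the paper: apply Theorem \ref{thm: higher} directly for $l\ge 1$, and for $l=0$ replace the first step of the chain \eqref{ineq: chain} by the classical isoperimetric inequality (which is \eqref{eq: star1} at $l=0$) and then run the remaining Hsiung--Minkowski comparisons. Your explicit verification of the constants and of the nonnegativity of the error term $(l+j)c^2-(j+1)ss''$ for $l=0$ in $\mathbb{R}^n$ is a welcome elaboration of what the paper states more tersely.
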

\begin{proof}
The case where $l\ge 1$ follows directly from Theorem \ref{thm: higher}. If $k=l=0$, this is the ordinary isoperimetric inequality. So \eqref{eq: star1} is still true when $l=0$, and we can perform induction starting from this case to show the assertion when $l= 0$.
\end{proof}

\begin{remark}
If $l=1$, the inequality becomes
\begin{align*}
n\mathrm{Vol} (\Omega) \le \int _{\partial \Omega} H_kr^{k+1}dS.
\end{align*}
In particular, if $k=1$, it is easily seen from the proof that the assumption can be weakened to $H_1\ge 0$ because $T_0=\mathrm{id}$ is always positive. Corollary \ref{cor: Rn} extends \eqref{ineq: Rn} in Section \ref{sec: eg} and also generalizes \cite{KM2014} Theorem 2. See also \cite[Theorem 2]{kwong2015monotone}.
\end{remark}

\begin{corollary}
Let $\Sigma$ be a closed embedded hypersurface in $\mathbb H^{m+1}$
which is star-shaped with respect to $0$ and $\Omega$ is the region enclosed by it. Assume that $H_k>0$ on $\Sigma$.
Then for any integer $l\ge 1$,
\begin{align*}
n \beta_n^{-\frac{l-1}{n}} \left(\int_{\Omega}\cosh r \,dv\right)^{\frac{n+l-1}{n}}
\le \int_{\partial \Omega} H_k \sinh^l r\tanh ^k r \,dS.
\end{align*}
\end{corollary}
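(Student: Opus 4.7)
The plan is to recognize this as a direct specialization of Theorem \ref{thm: higher} to the hyperbolic ambient $\mathbb H^{m+1}$, and to simplify the resulting constants and weight functions into the stated form.

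First I would identify the warped product data. Writing $n = m+1$, the hyperbolic space $\mathbb H^{n}$ is realized as $[0,\infty)\times \mathbb S^{n-1}$ with warping function $s(r)=\sinh r$, so $c(r)=s'(r)=\cosh r$; the fiber $N=\mathbb S^{n-1}$ has constant curvature $K=1$. I would then verify the hypotheses of Theorem \ref{thm: higher}: $s'(r)=\cosh r>0$ for all $r>0$, and the standard identity
\begin{equation*}
s'(r)^2 - s(r)s''(r) = \cosh^2 r - \sinh^2 r = 1 = K,
\end{equation*}
so the pinching $0 \le s'^2 - ss'' \le K$ holds (with equality on the upper bound). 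Star-shapedness of $\Sigma$ with respect to the origin and the assumption $H_k>0$ are given in the hypothesis.

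Next, I would plug these choices directly into Theorem \ref{thm: higher}. The right-hand side of Theorem \ref{thm: higher} becomes
\begin{equation*}
\int_{\partial\Omega} H_k \sinh^{l+k}r \cosh^{-k}r \, dS = \int_{\partial\Omega} H_k \sinh^{l}r \, \tanh^{k}r \, dS,
\end{equation*}
which already matches the desired right-hand side. For the left-hand side, I would use $|N| = |\mathbb S^{n-1}| = n\beta_n$ (from $\beta_n = \tfrac{2\pi^{n/2}}{n\,\Gamma(n/2)}$) and collect constants:
\begin{equation*}
|N|^{-\frac{l-1}{n}} \cdot n^{\frac{n+l-1}{n}} = (n\beta_n)^{-\frac{l-1}{n}} \cdot n^{\frac{n+l-1}{n}} = n\,\beta_n^{-\frac{l-1}{n}},
\end{equation*}
so the left-hand side is exactly $n\beta_n^{-\frac{l-1}{n}} \bigl(\int_\Omega \cosh r\, dv\bigr)^{\frac{n+l-1}{n}}$, giving the claim.

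There is no real obstacle here — the content is entirely in Theorem \ref{thm: higher}; this corollary is a bookkeeping exercise in substituting $(s,c,K)=(\sinh r,\cosh r,1)$ and in recognizing the volume of the round sphere in the normalization. If one wanted to record the equality case, it would read off from Theorem \ref{thm: higher}: equality holds if and only if $\Sigma$ is a geodesic sphere centered at $0$.
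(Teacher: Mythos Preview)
Your proposal is correct and is exactly the approach the paper intends: the corollary is an immediate specialization of Theorem \ref{thm: higher} with $s(r)=\sinh r$, $c(r)=\cosh r$, $K=1$, and $|N|=|\mathbb S^{n-1}|=n\beta_n$, and your verification of the hypotheses and constant bookkeeping is accurate.
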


\begin{corollary}
Let $\Sigma$ be a closed embedded hypersurface in $\mathbb S_+^{m+1}$
which is star-shaped with respect to $0$ and $\Omega$ is the region enclosed by it. Assume that $H_k>0$ on $\Sigma$.
Then for any integer $l\ge 1$,
\begin{align*}
n \beta_n^{-\frac{l-1}{n}} \left(\int_{\Omega}\cos r \, dv\right)^{\frac{n+l-1}{n}}
\le \int_{\partial \Omega} H_k \sin^l r\tan ^k r \,dS.
\end{align*}
\end{corollary}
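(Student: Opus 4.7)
The plan is to deduce this corollary as an immediate specialization of Theorem \ref{thm: higher}, which applies verbatim once we verify the hypotheses for the warped product description of the open hemisphere and then simplify the resulting expression.

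First I would record the structural identifications. The open hemisphere $\mathbb S_+^{m+1}$ is isometric to the warped product $[0, \pi/2) \times \mathbb S^m$ with metric $dr^2 + \sin^2 r \, g_{\mathbb S^m}$, so in the notation of Theorem \ref{thm: higher} we have $n = m+1$, $N = \mathbb S^m$, $s(r) = \sin r$, and consequently $c(r) = s'(r) = \cos r$. The base $\mathbb S^m$ has constant sectional curvature $K = 1$ and volume $|N| = n\beta_n$ (the standard identity relating the sphere volume to the unit ball volume in $\mathbb R^n$).

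Next I would verify the two analytic hypotheses on $s$. Since $r \in (0, \pi/2)$, we have $s'(r) = \cos r > 0$, so the monotonicity condition is immediate. For the pinching condition, a direct computation gives
\begin{equation*}
s'(r)^2 - s(r)s''(r) = \cos^2 r - \sin r \cdot (-\sin r) = \cos^2 r + \sin^2 r = 1 = K,
\end{equation*}
which saturates the upper bound and trivially satisfies $0 \le s'^2 - ss'' \le K$. Together with the hypothesis $H_k > 0$ on the star-shaped $\Sigma$, all hypotheses of Theorem \ref{thm: higher} are met.

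Finally I would simplify the conclusion. The integrand on the right-hand side becomes
\begin{equation*}
H_k \, s(r)^{l+k} c(r)^{-k} = H_k \sin^{l+k} r \cos^{-k} r = H_k \sin^l r \, \tan^k r,
\end{equation*}
and the left-hand side rearranges using $|N| = n\beta_n$ as
\begin{equation*}
(n\beta_n)^{-\frac{l-1}{n}} \left(n \int_\Omega \cos r \, dv\right)^{\frac{n+l-1}{n}} = n^{-\frac{l-1}{n} + \frac{n+l-1}{n}} \beta_n^{-\frac{l-1}{n}} \left(\int_\Omega \cos r \, dv\right)^{\frac{n+l-1}{n}} = n \beta_n^{-\frac{l-1}{n}} \left(\int_\Omega \cos r \, dv\right)^{\frac{n+l-1}{n}},
\end{equation*}
since the exponents of $n$ add to $1$. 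This is precisely the asserted inequality, and the equality characterization (slices of $\{r = r_0\}$, i.e.\ geodesic spheres centered at $0$) is inherited from Theorem \ref{thm: higher}. There is no real obstacle here beyond keeping track of the exponent bookkeeping; the substantive analytic work was already carried out in the proof of Theorem \ref{thm: higher}.
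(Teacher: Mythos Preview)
Your proposal is correct and follows exactly the approach intended by the paper: the corollary is stated as a direct specialization of Theorem \ref{thm: higher}, and the paper merely remarks that the hypotheses are satisfied for $\mathbb S_+^{n}$ with $s(r)=\sin r$. Your explicit verification of $s'^2 - ss'' = 1 = K$, the identification $|N| = n\beta_n$, and the exponent bookkeeping are all accurate and simply spell out what the paper leaves to the reader.
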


It is also possible to prove results analogous to Theorem \ref{thm: higher} for standard space forms by extending the inequalities in Section \ref{sec: eg} using the weighted Hsiung Minkowski inequalities, we will not do it here for the sake of simplicity.

\section{Applictions to eigenvalue estimates}\label{sec: eigen}
In this section, we apply Theorem \ref{thm3} to obtain some sharp eigenvalue estimates.

We define $\lambda_1(T_k)$ to be the first eigenvalue of the symmetric second order differential operator $-\mathrm{div}(T_k\circ\nabla )$ on $\Sigma$. The equality holds if and only if $\Sigma$ is immersed as a geodesic sphere. Note that $\lambda_1(T_0)$ is just the first Laplacian eigenvalue.

We now give an application of our main result to eigenvalues estimatation. The following theorem generalizes \cite{wang2010isoperimetric} Theorem 1.2, which corresponds to the case where $k=0$.
\begin{theorem}\label{thm: lambda}
\label{thm: lambda1}
Let $\Sigma$ be a closed embedded hypersurface in $\mathbb R^{m+1}$ enclosing a region $\Omega$.
Then
\begin{align*}
\lambda_1(T_k)\le \frac{(m-k){m\choose k}\beta^{\frac{1}{n}}}{n \mathrm{Vol}(\Omega)^{\frac{n+1}{n}}}\int_{\partial \Omega} H_k dS
\end{align*}
where $n=m+1$ and $\beta_n$ is the volume of the unit ball in $\mathbb R^n$.
The equality holds if and only if $\Sigma$ is a sphere. (Note that $\lambda_1(T_0)$ is just the first Laplacian eigenvalue.)
\end{theorem}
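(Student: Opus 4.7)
The plan is to apply the Rayleigh-quotient characterization of $\lambda_1(T_k)$ with the coordinate functions of $\mathbb{R}^{m+1}$ as test functions, and then use inequality \eqref{ineq: Rn} from Section \ref{sec: eg} to lower-bound the resulting $L^2$-denominator. First I would translate $\Sigma$ so that its center of mass $c:=\frac{1}{|\Sigma|}\int_\Sigma x\,dS$ sits at the origin of $\mathbb{R}^{m+1}$; this leaves $\mathrm{Vol}(\Omega)$, the mean curvatures $H_k$, and the spectrum of $-\mathrm{div}(T_k\circ\nabla)$ unchanged. The coordinate functions $u_i(x):=x_i$, $i=1,\dots,n$, then satisfy $\int_\Sigma u_i\,dS=0$, so for each $i$ the variational characterization yields
$$\lambda_1(T_k)\int_\Sigma u_i^2\,dS\;\le\;\int_\Sigma\langle T_k\nabla^\Sigma u_i,\nabla^\Sigma u_i\rangle\,dS.$$

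Next I would sum over $i$. Since $\nabla^\Sigma u_i=e_i^T$ (tangential projection of $e_i$) and $\sum_i\langle e_i,v\rangle\langle e_i,w\rangle=\langle v,w\rangle$ for any tangent vectors $v,w$, the right-hand side collapses to $\int_\Sigma\mathrm{tr}_\Sigma(T_k)\,dS$, and the standard Newton-transformation identity $\mathrm{tr}_\Sigma(T_k)=(m-k)\binom{m}{k}H_k$ turns this into $(m-k)\binom{m}{k}\int_\Sigma H_k\,dS$. The left-hand side sums to $\lambda_1(T_k)\int_\Sigma|x|^2\,dS$. Applying \eqref{ineq: Rn} with exponent $l=2$ (valid from the shifted origin by translation invariance of $\mathbb{R}^n$) gives $\int_\Sigma|x|^2\,dS\ge n\beta_n^{-1/n}\mathrm{Vol}(\Omega)^{(n+1)/n}$, and combining yields the claimed estimate.

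For the equality analysis: equality in the summed Rayleigh step forces each restriction $x_i|_\Sigma$ to be a first eigenfunction of $-\mathrm{div}(T_k\circ\nabla)$, while equality in \eqref{ineq: Rn}---whose weight $b(r)A(r)=r^{n+1}$ is strictly positive for $r>0$---forces $\Omega$ to be a round ball centered at $c$, so $\Sigma$ is a geodesic sphere $S^m(R)$. Conversely, on $S^m(R)$ all principal curvatures equal $R^{-1}$, hence $T_k=\binom{m-1}{k}R^{-k}\mathrm{Id}$ on $T\Sigma$, the linear functions $x_i|_\Sigma$ are genuine first Laplacian eigenfunctions with matching eigenvalue $m\binom{m-1}{k}R^{-k-2}$, and both inequalities saturate simultaneously. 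I expect no substantial obstacle; the one delicate point is that the Rayleigh characterization tacitly requires $T_k$ to be positive (semi-)definite so that $-\mathrm{div}(T_k\circ\nabla)$ is elliptic with a well-defined first positive eigenvalue---a hypothesis that should be acknowledged (for instance via $k$-convexity of $\Sigma$, cf.\ Lemma \ref{T positive}).
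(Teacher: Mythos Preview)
Your proposal is correct and follows essentially the same approach as the paper: translate so the coordinate functions have zero mean on $\Sigma$, use them as test functions in the Rayleigh quotient for $\lambda_1(T_k)$, sum over $i$ to obtain $\mathrm{tr}_\Sigma(T_k)=(m-k)\binom{m}{k}H_k$ on the right, and bound the left-hand side $\int_\Sigma r^2\,dS$ from below via the weighted isoperimetric inequality \eqref{ineq: Rn} (equivalently Theorem~\ref{thm3} with $a(r)=r^2$), with equality forcing $\partial\Omega$ to be a coordinate sphere. Your added remarks---the explicit converse check on $S^m(R)$ and the caveat that the variational characterization presupposes $T_k\ge 0$ so that the operator is elliptic---are improvements over the paper's presentation, which uses the Rayleigh inequality without comment on this hypothesis.
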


\begin{proof}
By a suitable translation, we can assume that $\int_{\partial \Omega}x_i\,dS=0$ for $i=1, \cdots, n$.

By Theorem \ref{thm3}, we have
\begin{equation}\label{ineq: volr}
\int_{\partial \Omega} r^2 dS
\ge \int_{\partial \Omega^\#}r^2 dS
= n \beta_n^{-\frac{1}{n}} \mathrm{Vol}(\Omega)^{\frac{n+1}{n}}.
\end{equation}

By the variational characterization of $\lambda_1(T_k)$ and the fact that $\textrm{tr}_\Sigma(T_k)=(m-k)H_k$, we have
\begin{equation}\label{ineq: lambda}
\begin{split}
\lambda_1(T_k) \int_{\partial \Omega} r^2 =&\lambda_1(T_k)\sum_{i=1}^n \int_{\partial \Omega} {x_i} ^2
\le \int_{\partial \Omega} \sum_{i=1}^n \langle T_k (\nabla x_i), \nabla x_i\rangle dS\\
=&\int_{\partial \Omega}\sum_{j,l=1}^m \left(\sum_{i=1}^n(\nabla _{e_j}x_i)(\nabla _{e_l}x_i) \right)(T_k)_j^l dS\\
=&\int_{\partial \Omega} \mathrm{tr}_\Sigma(T_k) dS\\
=&\int_{\partial \Omega} (m-k){m\choose k}H_k dS.
\end{split}
\end{equation}
Therefore combining the two inequalities we have
\begin{align*}
n \beta_n^{-\frac{1}{n}} \mathrm{Vol}(\Omega)^{\frac{n+1}{n}} \lambda_1(T_k)
\le (m-k){m\choose k}\int_\Sigma H_k dS.
\end{align*}
If the equality holds, then by Theorem \ref{thm3}, $\partial \Omega$ is a sphere.
\end{proof}

\begin{corollary}
Let $\Sigma$ be a closed embedded hypersurface in $\mathbb R^{m+1}$ enclosing a region $\Omega$. Then the first Laplacian eigenvalue $\lambda_1(\Sigma)$ on $\Sigma$ satisfies
\begin{align*}
\lambda_1(\Sigma)\le \frac{m\beta_n^{\frac{1}{n}} \mathrm{Area}( \Sigma)}{n\mathrm{Vol}(\Omega)^{\frac{n+1}{n}}}.
\end{align*}
The equality is attained if and only if $\Omega$ is a ball.
\end{corollary}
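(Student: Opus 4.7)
The plan is to observe that the corollary is the special case $k=0$ of Theorem \ref{thm: lambda1}, so essentially no additional work is required: I just need to unpack the definitions when $k=0$ and read off the inequality.

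First I would note that the Newton transformation $T_0$ is the identity map on $T\Sigma$ by definition, so the operator $-\mathrm{div}(T_0\circ \nabla) = -\Delta_\Sigma$ coincides with the Laplace--Beltrami operator on $\Sigma$, and hence $\lambda_1(T_0) = \lambda_1(\Sigma)$. Similarly, $H_0 = 1$ by convention, so $\int_{\partial \Omega} H_0\, dS = \mathrm{Area}(\Sigma)$. Finally, the combinatorial factor becomes $(m-k)\binom{m}{k} = m$ when $k=0$.

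Substituting these identifications into the conclusion of Theorem \ref{thm: lambda1} gives
\begin{equation*}
\lambda_1(\Sigma) \le \frac{m\, \beta_n^{1/n}}{n\, \mathrm{Vol}(\Omega)^{(n+1)/n}} \int_{\partial \Omega} 1\, dS = \frac{m\, \beta_n^{1/n}\, \mathrm{Area}(\Sigma)}{n\, \mathrm{Vol}(\Omega)^{(n+1)/n}},
\end{equation*}
which is precisely the desired inequality. The equality clause of Theorem \ref{thm: lambda1} asserts that equality forces $\Sigma$ to be a sphere, which is equivalent to saying that the enclosed region $\Omega$ is a ball; this gives the equality characterization. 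Since every step is bookkeeping, I do not anticipate any real obstacle — the nontrivial work has already been done in proving Theorem \ref{thm: lambda1}, which itself rests on the weighted isoperimetric inequality \eqref{ineq: volr} combined with the Rayleigh--Ritz characterization of $\lambda_1(T_k)$ applied to the coordinate functions $x_i$ (after translating so that $\int_{\partial\Omega} x_i\, dS = 0$).
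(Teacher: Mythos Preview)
Your proposal is correct and matches the paper's approach exactly: the paper gives no separate proof of the corollary, implicitly treating it as the immediate $k=0$ specialization of Theorem \ref{thm: lambda1}, and your unpacking of $T_0=\mathrm{Id}$, $H_0=1$, and $(m-0)\binom{m}{0}=m$ is precisely what is intended.
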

To state our next result, we need to define the Steklov eigenvalues, as follows. Let $(\Omega,g)$ be a compact Riemannian manifold with smooth boundary $\partial \Omega=\Sigma$. The first nonzero Steklov eigenvalue is defined as the smallest $p\ne0$ of the following Steklov problem (\cite{Stekloff})
\begin{equation}\label{eq: stek}
\begin{cases}
\Delta f =0\quad &\textrm{on }\Omega\\
\frac{\partial f}{\partial \nu}=p f \quad &\textrm{on }\partial \Omega
\end{cases}
\end{equation}
where $\nu$ is the unit outward normal of $\partial \Omega$.
Physically, this describes the stationary heat distribution in a body $\Omega$ whose flux through $\partial \Omega$ is proportional to the temperature on $\partial \Omega$. It is known that the Steklov boundary problem \eqref{eq: stek} has a discrete spectrum
$$0=p_0< p_1\le p_2\le\cdots \to \infty.$$
Moreover, $p_1$ has the following variational characterization
(e.g. \cite[Equation 2.3]{KS})
\begin{equation}\label{eq: p1}
p_1(\Omega)=\min_{\int_{\partial \Omega}f dS=0} \frac{\int_\Omega|\nabla f|^2 dv}{\int_{\partial \Omega}f^2 dS}.
\end{equation}

We will now prove an upper bound of $p_1(\Omega)$ with the techniques similar to that in Theorem \ref{thm: lambda}.
\begin{theorem}
For a domain $\Omega$ in $\mathbb R^n$ with smooth boundary, the first Steklov eigenvalue $p_1$ satisfies
\begin{equation*}
p_1(\Omega)\le \left(\frac{\beta_n}{\mathrm{Vol}(\Omega)}\right)^{\frac{1}{n}}.
\end{equation*}
The equality is attained if and only if $\Omega$ is a ball.
\end{theorem}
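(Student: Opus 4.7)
The plan is to mimic the strategy used for Theorem \ref{thm: lambda1}, but with the coordinate functions serving as test functions for the Steklov quotient rather than the Laplacian quotient. Concretely, after translating $\Omega$ so that $\int_{\partial\Omega} x_i\, dS = 0$ for each $i=1,\dots,n$, each coordinate function $x_i$ is admissible in the Rayleigh characterization \eqref{eq: p1}. Since $x_i$ is harmonic on $\mathbb{R}^n$ with $|\nabla x_i|^2 \equiv 1$, the numerator of the Rayleigh quotient is simply $\mathrm{Vol}(\Omega)$, giving
\begin{equation*}
p_1(\Omega) \int_{\partial\Omega} x_i^2\, dS \;\le\; \int_\Omega |\nabla x_i|^2\, dv \;=\; \mathrm{Vol}(\Omega).
\end{equation*}

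Next, I would sum over $i=1,\dots,n$. Because $\sum_i x_i^2 = r^2$ with $r$ denoting distance to the chosen origin, this yields
\begin{equation*}
p_1(\Omega) \int_{\partial\Omega} r^2\, dS \;\le\; n\,\mathrm{Vol}(\Omega).
\end{equation*}
Now I invoke the weighted isoperimetric inequality \eqref{ineq: volr} (which is the $k=2$ instance of Theorem \ref{thm3} applied on $\mathbb{R}^n$, exactly as used in the proof of Theorem \ref{thm: lambda1}):
\begin{equation*}
\int_{\partial\Omega} r^2\, dS \;\ge\; n\,\beta_n^{-1/n}\,\mathrm{Vol}(\Omega)^{(n+1)/n}.
\end{equation*}
Combining the two displays and cancelling gives $p_1(\Omega) \le \bigl(\beta_n/\mathrm{Vol}(\Omega)\bigr)^{1/n}$, as desired.

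For the equality case, one direction is the direct computation on a round ball centered at the origin, where $x_1,\dots,x_n$ are precisely the first Steklov eigenfunctions and $\partial\Omega = \{r = R\}$ realizes equality in \eqref{ineq: volr}. Conversely, if equality holds in the Steklov bound, then both inequalities above must be equalities; the equality clause of Theorem \ref{thm3} (with $a(r) = r^2$, which satisfies $b(r)A(r) = r^{n+1} > 0$ for $r>0$) forces $\partial\Omega$ to be a coordinate slice $\{r = R\}$, i.e.\ a sphere centered at the translated origin, so $\Omega$ is a ball. The only mildly delicate point is checking that the translation normalization is compatible with the hypotheses of Theorem \ref{thm3}; but since the latter is applied in $\mathbb{R}^n$ with $r$ measured from whatever origin we fixed, and this is the same origin with respect to which $\int_{\partial\Omega} x_i\, dS = 0$, there is no inconsistency. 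Overall the argument is a direct adaptation of the Theorem \ref{thm: lambda1} template, with harmonicity of $x_i$ replacing the trace identity for Newton tensors.
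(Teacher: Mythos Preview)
Your proposal is correct and follows essentially the same approach as the paper: translate so that the coordinate functions have zero boundary mean, plug them into the variational characterization \eqref{eq: p1}, sum to obtain $p_1\int_{\partial\Omega} r^2\,dS \le n\,\mathrm{Vol}(\Omega)$, and combine with the weighted isoperimetric bound \eqref{ineq: volr}. Your treatment of the equality case is slightly more detailed than the paper's (you verify both directions and spell out why the equality clause of Theorem \ref{thm3} applies), but the argument is the same.
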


\begin{proof}
As in the proof of Theorem \ref{thm: lambda}, we assume $\int_{\partial \Omega}x_i dS=0$ for $i=1, \cdots, n$. So by \eqref{ineq: volr} and  \eqref{eq: p1},
\begin{align*}
n \beta_n^{-\frac{1}{n}} \mathrm{Vol}(\Omega)^{\frac{n+1}{n}}
\le \int_{\partial \Omega} r^2 dS
=& \sum_{i=1}^{n}\int_{\partial \Omega} x_i^2 dS\\
\le& \sum_{i=1}^{n}\frac{1}{p_1} \int_{\Omega}|\nabla x_i^2|dv\\
=&\frac{1}{p_1} n\mathrm{Vol}(\Omega).
\end{align*}
From this we obtain
\begin{equation*}
p_1\le \left(\frac{\beta_n}{\mathrm{Vol}(\Omega)}\right)^{\frac{1}{n}}.
\end{equation*}
If the equality holds, then by Theorem \ref{thm3}, $\partial \Omega$ is a sphere.
\end{proof}

\section{The necessity of the conditions}\label{sec: necess}
In this section, we examine the necessity of the conditions in Theorem \ref{thm1}, the classical isoperimetric inequality.

First, we consider the condition that $A\circ V^{-1}$ being a convex function (Assumption \ref{cond: convex}), or equivalently that $ss''-s'^2\ge 0$ if $s$ is twice differentiable. We will show that this condition is necessary in the following sense:
\begin{proposition}\label{prop: necess}
Suppose $s(r_0)s''(r_0)-s'(r_0)^2<0$, then there exists a compact Riemannian manifold $(N, g_N)$ such that for the warped product manifold $([0, \lambda)\times N, dr^2+s(r)^2g_N)$, the coordinate slice $\Sigma=\{r=r_0\}$ fails to be area-minimizing among nearby hypersurfaces enclosing the same volume.
\end{proposition}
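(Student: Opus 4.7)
The plan is to exploit the stability criterion from \cite{GLW} quoted in Section \ref{sec: intro}: the slice $\Sigma = \{r = r_0\}$ is a stable CMC hypersurface if and only if
$$\lambda_1(g_N) \ge m\bigl(s'(r_0)^2 - s(r_0)s''(r_0)\bigr).$$
By hypothesis the right-hand side is strictly positive, so it suffices to produce a compact base $(N, g_N)$ whose first Laplace eigenvalue is strictly smaller than this positive threshold. Once this is achieved, $\Sigma$ is not a stable CMC hypersurface, which is exactly the statement that some nearby volume-preserving perturbation of $\Sigma$ strictly decreases area.

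To construct the base manifold, I would take any compact $m$-dimensional Riemannian manifold, say $(N, h) = (S^m, g_{\mathrm{round}})$, and set $g_N := c^2 h$ for a constant $c > 0$ to be chosen. The scaling law $\lambda_1(c^2 h) = c^{-2}\lambda_1(h)$ allows one to take $c$ large enough so that
$$\lambda_1(g_N) < m\bigl(s'(r_0)^2 - s(r_0)s''(r_0)\bigr),$$
so that the warped product $([0,\lambda) \times N,\, dr^2 + s(r)^2 g_N)$ is a legitimate candidate.

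To translate failure of CMC stability into failure of the local area-minimizing property, let $u$ be a first nonzero Laplace eigenfunction on $(N, g_N)$, lifted trivially to $\Sigma$. Being $L^2$-orthogonal to the constants, it satisfies $\int_\Sigma u\, dS = 0$, and by the choice of $(N,g_N)$ the second variation of area at $\Sigma$ along the normal variation $u\,\partial_r$ is strictly negative. Considering then the graphical family $\Sigma_t = \{r = r_0 + t u(\theta)\}$, which is volume-preserving to first order, a standard implicit-function-theorem correction produces a nearby family $\widetilde\Sigma_t$ enclosing \emph{exactly} the volume of $\Sigma$ whose area admits the expansion $\mathrm{Area}(\widetilde\Sigma_t) = \mathrm{Area}(\Sigma) + \tfrac12 t^2 Q(u) + o(t^2)$ with $Q(u) < 0$, so $\mathrm{Area}(\widetilde\Sigma_t) < \mathrm{Area}(\Sigma)$ for all small nonzero $t$.

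The only step requiring any care is the passage from the first-order volume-preserving variation $\Sigma_t$ to the exact one $\widetilde\Sigma_t$, but this is entirely routine and cannot affect the sign of the leading second-order term, since the correction contributes only at higher order in $t$. All the nontrivial geometric content has been packaged into the stability formula \cite[Proposition 6.2]{GLW}, which is invoked directly.
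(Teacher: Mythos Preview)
Your proposal is correct and follows essentially the same route as the paper: choose a base $(N,g_N)$ with $\lambda_1(g_N)$ below the threshold $m\bigl(s'(r_0)^2 - s(r_0)s''(r_0)\bigr)$ (a large sphere in both cases), take the first eigenfunction as a mean-zero test variation, and conclude that the constrained second variation of area is negative. The only cosmetic difference is that the paper writes out the second variation formula and the identity $|B|^2 + \mathrm{Ric}_g(\nu,\nu) = m\bigl(s'(r_0)^2 - s(r_0)s''(r_0)\bigr)/s(r_0)^2$ explicitly, whereas you invoke the equivalent packaged stability criterion from \cite{GLW}; your explicit implicit-function-theorem correction to exact volume preservation is, conversely, left implicit in the paper's appeal to the constrained second variation formula.
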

\begin{proof}
The second variation formula for area on a constant-mean-curvature hypersurface $\Sigma$, which is a critical point of the area functional subject to the constraint that a fixed amount of volume is enclosed, reads (e.g. \cite[Equation (1)]{CY})
\begin{align*}
\left.\frac{d^2}{dt^2}\right|_{t=0}\mathrm{Area}(\Sigma_t)
= \int_{\Sigma} \left(|\nabla_\Sigma u|^2-\left(|B|^2+\mathrm{Ric}_g (\nu, \nu)\right)u^2\right)dS
\end{align*}
where the 1-parameter family of deformations of $\Sigma$ is given by $\Sigma_t=\phi(\Sigma, t)$ with $\Sigma_0=\Sigma$, $|B|^2$ is the square norm of the second fundamental form, $u$ is a function on $\Sigma$ and the normal variation is $u\nu_{\Sigma_t}$. In order to preserve the volume, we require $\int_\Sigma u dS=0$.

Suppose $s(r_0)s''(r_0)-s'(r_0)^2=-k<0$.
Choose a compact Riemannian manifold $(N, g_N)$ such that its first Laplacian eigenvalue $\lambda_1(g_N)<m k$. Such an $N$ can, for example, be a sphere $\mathbb S^m(R)$ with sufficiently large radius $R$ such that $\frac{1}{R^2}<k$.

Let $M=[0, \lambda)\times \mathbb R$ equipped with the metric $g=dr^2+s(r)^2 g_N$, then its straightforward to compute that
\begin{equation}\label{eq: ric}
|B|^2+\mathrm{Ric}_g(\nu, \nu)=m\left(\frac{s'(r_0)^2-s(r_0)s''(r_0)}{s(r_0)^2}\right)
\end{equation}
on the hypersurface $\Sigma=\{r=r_0\}$.
It is also easy to see that $u$ is an eigenfunction on $\Sigma$ with eigenvalue $\frac{\lambda_1(g_N)}{s(r_0)^2}$, so by the variational characterization of $\lambda_1$, $u$ satisfies $\int_\Sigma u dS=0$ and
\begin{align*}
\int_\Sigma |\nabla_\Sigma u|^2 dS=\frac{\lambda_1(g_N)}{s(r_0)^2}\int_\Sigma u^2 dS.
\end{align*}
Therefore the second variation formula becomes
\begin{align*}
\left.\frac{d^2}{dt^2}\right|_{t=0}\mathrm{Area}(\Sigma_t)
=& \int_{\Sigma} \left(|\nabla_\Sigma u|^2-\left(|B|^2+\mathrm{Ric}_g (\nu, \nu)\right)u^2\right)dS\\
=& \left(\lambda_1(g_N)-m\left(s'(r_0)^2-s(r_0)s''(r_0)\right)\right)\int_{\Sigma} \frac{u^2}{s(r_0)^2} dS\\
<&0.
\end{align*}
It follows that $\Sigma$ fails to be area-minimizing among nearby hypersurfaces enclosing the same volume.
\end{proof}
\begin{remark}
The necessity of the condition $ss''-s'^2\ge -1$ when $N$ is the unit sphere is also discussed in \cite{chunhe2016necessary}. See also \cite{GLW} Remark 6.3. We notice that one of the conditions of Theorem 1.2 (isoperimetric inequality) in \cite{GLW} is that $-k\le ss''-s'^2\le 0$. Proposition \ref{prop: necess} does not contradict the result in \cite{GLW} because it is assumed that $\mathrm{Ric}_N \ge (m-1)k g_N$ in \cite{GLW} while in our example, $\mathrm{Ric}_N<(m-1)k g_N$. Indeed, as already noted in \cite{GLW}, the condition that $\mathrm{Ric}_N \ge (m-1)k g_N$ guarantees that $\lambda_1(g_N)\ge m k$ by Lichnerowicz' theorem \cite{Lich}.
\end{remark}

It is also easy to see that the condition that the surjectivity of the projection map $\pi: \partial \Omega\to N$ is necessary if $s(0)>0$, or equivalently, $A(0)>0$. Indeed, if $A(0)>0$, $|B_r|\to 0$ but $\mathrm{Area}(\{r=r_0\})\to A(0)$ as $r\to 0^+$. If we take $\Omega$ to be a small enough geodesic ball around a point which is far from $r=0$, then the isoperimetric inequality clearly fails as $\partial \Omega$ has smaller area than $\partial B_r$, where $B_r=\Omega^\#$.

We now show that in the case where $s(0)=0$, Assumption\ref{cond: surj} in Theorem \ref{thm1} cannot be removed unless we impose more restriction on $s'(0)$.
\begin{proposition}\label{prop: s'(0)}
Suppose $s(0)=0$.
If $s'(0) >\left(\frac{n \beta_n}{|N|}\right)^{\frac{1}{n-1}}$, then Theorem \ref{thm1} fails if Assumption \ref{cond: surj} is removed. In particular, if $N=\mathbb S^{n-1}$ and $s'(0)> 1$, then Theorem \ref{thm1} fails if Assumption \ref{cond: surj} is removed.
\end{proposition}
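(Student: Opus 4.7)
The plan is to construct an explicit family of small geodesic balls that violate the isoperimetric inequality once the surjectivity hypothesis is dropped. Fix an interior point $p \in M$ with $r(p) > 0$ where the warped metric is smooth, and for each small $\epsilon > 0$ let $\Omega_\epsilon = B(p, \epsilon)$. Since $\Omega_\epsilon$ stays in a neighborhood of $p$ away from $\{r = 0\}$, the projection $\pi: \partial \Omega_\epsilon \to N$ maps only into a tiny neighborhood of $\pi(p)$, so Assumption \eqref{cond: surj} fails badly. The task is then to show $|\partial \Omega_\epsilon| < |\partial \Omega_\epsilon^\#|$ once $\epsilon$ is small enough.

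The argument splits into two independent asymptotic calculations. First, since $g$ is smooth at $p$, the standard Riemannian normal-coordinate expansion yields
\begin{equation*}
|\Omega_\epsilon| = \beta_n \epsilon^n(1 + O(\epsilon^2)), \qquad |\partial \Omega_\epsilon| = n\beta_n \epsilon^{n-1}(1 + O(\epsilon^2)).
\end{equation*}
Second, the assumptions $s(0) = 0$ and the existence of $s'(0)$ give $s(r) = s'(0) r + o(r)$ as $r \to 0^+$, whence $A(r) = (s'(0))^{n-1} r^{n-1}(1+o(1))$ and $v(R) = \tfrac{(s'(0))^{n-1}}{n} R^n (1+o(1))$. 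Therefore
\begin{equation*}
|B_R| = \frac{|N|(s'(0))^{n-1}}{n} R^n(1+o(1)), \qquad |\partial B_R| = |N|(s'(0))^{n-1} R^{n-1}(1+o(1)).
\end{equation*}

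Equating the volumes $|B_{R_\epsilon}| = |\Omega_\epsilon|$ determines $R_\epsilon \sim \bigl(n\beta_n/(|N|(s'(0))^{n-1})\bigr)^{1/n}\epsilon$, and substituting this into the area expression for $\partial B_{R_\epsilon}$ produces the clean limit
\begin{equation*}
\frac{|\partial \Omega_\epsilon^\#|}{|\partial \Omega_\epsilon|} \longrightarrow \left(\frac{|N|(s'(0))^{n-1}}{n\beta_n}\right)^{1/n} \quad \text{as } \epsilon \to 0^+.
\end{equation*}
Under the hypothesis $s'(0) > (n\beta_n/|N|)^{1/(n-1)}$, this limit strictly exceeds $1$, so $|\partial \Omega_\epsilon^\#| > |\partial \Omega_\epsilon|$ for all sufficiently small $\epsilon$, which is the desired failure of the isoperimetric inequality. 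The special case $N = \mathbb{S}^{n-1}$ satisfies $|N| = n\beta_n$, so the threshold collapses to $s'(0) > 1$.

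The main obstacle is conceptual rather than technical: one must recognize that near $p$ the domain looks Euclidean, while near the tip $\{r=0\}$ the warped product is locally a cone whose aperture is controlled by $s'(0)$. The numeric threshold then appears as a mismatch between the Euclidean constant $n\beta_n$ (the boundary density at $p$) and the conical constant $|N|(s'(0))^{n-1}$ (the boundary density near the tip); once these two explicit small-scale laws are written down, matching exponents is routine.
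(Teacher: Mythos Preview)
Your proposal is correct and is essentially the same argument as the paper's: compare the volume and area asymptotics of a small geodesic ball $B(p,\epsilon)$ at an interior point (via the standard normal-coordinate expansion) with those of the coordinate ball $B_R(0)$ near the tip (via $s(r)\sim s'(0)r$), match volumes to find $R_\epsilon$, and conclude that the area ratio exceeds $1$ precisely when $s'(0)>(n\beta_n/|N|)^{1/(n-1)}$. The only cosmetic differences are that the paper writes the final comparison as a coefficient inequality rather than as a limiting ratio, and uses $O(r^2)$ where you use $o(r)$.
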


\begin{proof}
We now use the notation that $B_R(0)=\{(r, \theta): r<R\}$ and $B_R(p)$ to be the geodesic ball of radius $R$ around a point $p$ in $M$ with $r(p)\ne 0$.
We will compare the areas of $\partial B_r(p)$ and $\partial B_r(0)$ for small $r$.

By Taylor's theorem, $s(r)=s'(0)r+O(r^2)$ as $r\to 0^+$. Then
\begin{align*}
|B_r(0)|=&\frac{|N|}{n} s'(0)^{n-1}r^n+O(r^{n+1})\quad \textrm{ and }\\
|\partial B_r(0)|=&|N| s(r)^{n-1}= |N| s'(0)^{n-1}r^{n-1}+O(r^n).
\end{align*}

On the other hand, if $r(p)\ne 0$, then $|B_r(p)|=\beta_n r^n +O(r^{n+2})$ and $\partial B_r(p)=n \beta_n r^{n-1}+O(r^{n+1})$ (cf. \cite[Theorem 3.1]{gray1974volume}).

Fix a small geodesic ball $B_{r}(p)$. If $|B_{r_0}(0)|=|B_{r}(p)|$, we must have $r_0=\left(\frac{n \beta_n}{|N|}\right)^{\frac{1}{n}}s'(0)^{-\frac{n-1}{n}} r+O({r}^2)$. So
\begin{align*}
|\partial B_{r_0}(0)|
=&|N|s(r_0)^{n-1}\\
=&|N| s'(0)^{n-1}\left(\left(\frac{n \beta_n}{|N|}\right)^{\frac{1}{n}}s'(0)^{-\frac{n-1}{n}} r\right)^{n-1}+O({r}^{n})\\
=&|N|^{\frac{1}{n}}\left(n \beta_n\right)^{\frac{n-1}{n}} s'(0)^{\frac{n-1}{n}} r ^{n-1}+O({r}^{n}).
\end{align*}
Therefore, for the isoperimetric inequality to hold, it is necessary that $|N|^{\frac{1}{n}}\left(n \beta_n\right)^{\frac{n-1}{n}} s'(0)^{\frac{n-1}{n}} \le n \beta_n$, i.e. $s'(0)\le \left(\frac{n \beta_n}{|N|}\right)^{\frac{1}{n-1}}$. In particular, if $N=\mathbb S^{n-1}$, this means that $s'(0)\le 1$.
\end{proof}
In particular, Proposition \ref{prop: s'(0)} implies that for a given $s(r)$ satisfying all the conditions in Theorem \ref{thm1} and such that $s(0)=0$ and $s'(0)>0$, we can choose a Riemannian manifold $N$ with large enough volume $|N|$ such that $s'(0) >\left(\frac{n \beta_n}{|N|}\right)^{\frac{1}{n-1}}$ (for example by choosing a sphere with large enough radius), then the isoperimetric inequality will fail if we drop the Assumption \ref{cond: surj}. Alternatively, we can also fix $N$ and rescale $s(r)$, which does not affect the assumptions in Theorem \ref{thm1}.

Finally, we give an example in which the isoperimetric inequality fails when all assumptions except Assumption \eqref{cond: mono} hold. In view of Theorem \ref{thm2}, the counterexample must not be a star-shaped hypersurface. For convenience, we take the interval to be $[1, \infty)$ and $N$ to be any compact $m$-dimensional manifold. Let $s(r)=r^{-\frac{1}{m}}$ which is a decreasing function. As $\log s(r)=-\frac{1}{m}\log r$ is convex, \eqref{cond: convex} is satisfied. For the region $\Omega=\{R_1<r<R_2\}$, $|\Omega| =|N|(\log R_2 -\log R_1)$ and $ |\partial \Omega| = |N|({R_1}^{-1}+{R_2}^{-1})$. If we take $R_2=e R_1$ and let $R_1\to \infty$, then we have $|\Omega|\equiv |N|$ but $|\partial \Omega|\to 0$. Therefore the isoperimetric inequality fails.

\end{document}